\newtheorem{thm}{Theorem}[section]
\newtheorem{lemma}[thm]{Lemma}
\newtheorem{prop}[thm]{Proposition}
\theoremstyle{definition}
\newtheorem{defn}[thm]{Definition}
\newtheorem{const}[thm]{Construction}
\def\ve{\varepsilon}
\def\R{{\mathbb R}}
\def\C{{\mathbb C}}
\def\H{{\mathbb H}}
\def\N{{\mathbb N}}
\def\T{{\mathbb T}}
\def\Z{{\mathbb Z}}
\def\Up{{\Upsilon}}
\def\1{{1}}
\def\so{_{\scriptscriptstyle O}}
\def\su{_{\scriptscriptstyle U}}
\def\pr{^{\scriptscriptstyle \R}}
\def\sc{_{\scriptscriptstyle \C}}
\def\crr{^{\scriptscriptstyle {\it CR}}}
\def\crt{^{\scriptscriptstyle {\it CRT}}}
\def\scrt{_{\scriptscriptstyle {\it CRT}}}
\def\CRT{\mathcal{CRT}}
\def\T{ ^{\rm{Tr}}}
\def\sT{ ^{\sharp \otimes {\rm{Tr}}}}
\def\Ttau{ ^{\rm{Tr} \otimes \tau}}
\def\sTtau{^{\sharp \otimes {\rm{Tr}} \otimes \tau}}
\def\stau{^{\sharp \otimes \tau}}
\def\diag{\rm{diag}}
\newcommand{\fa}{\mathfrak a}
\newcommand{\fS}{\mathfrak S}
\newcommand{\sm}[4]
	{ \left( \begin{smallmatrix} {#1} & {#2} \\ {#3} & {#4} \end{smallmatrix} \right)  }
\def\id{\text {id} \,}
\def\hom{\text {Hom}}
\begin{document}

~

\vspace{-1cm}

\title{The real {$K$}-Theory of the Sphere with the Antipodal Involution} 
\author{Jeffrey L. Boersema}
\begin{abstract}
This is a thorough investigation on the real $K$-theory of the sphere $S^d$ associated with the antipodal involution. We calculate the algebraic structure of real $K$-theory and united $K$-theory for all $d$, we write down explicit unitaries representing the generators of all the non-trivial $K$-theory groups for $d \leq 4$, and we describe a recipe for generating such unitaries for all $d$.
\end{abstract}

\maketitle




\section{Introduction}

Recall, from \cite{blackadarbook} or \cite{parkbook}, that the complex K-groups on the sphere 
$$S^d = \{x \in \R^{d+1} \mid \| x \| = 1\}$$ are given by
$$K^*(S^d) = K_*(C(S^d)) =  \begin{cases} ( \Z \oplus \Z, 0) & \text{$d$ even} \\  ( \Z , \Z) & \text{$d$ odd.}
	\end{cases} $$
where $C(S^d)$ denotes the set of complex-valued continuous functions on the sphere $S^d$. Recently an iterative construction was given in \cite{SBS} to construct unitaries in matrix algebras over $C(S^d)$ that will represent the appropriate generators of the non-zero $K$-theory classes, for all $d$. In this article we will conduct a similar study of the real $K$-theory of the sphere $S^d$ with the antipodal involution $\fa$, using the unitary picture of real $K$-theory for real $C \sp *$-algebras developed in \cite{BL}.

It is a familiar adage that theory of commutative unital $C \sp *$-algebras is equivalent to the theory of compact Hausdorff spaces; and that the theory commutative non-unital $C \sp *$-algebras is equivalent to the theory of locally compact Hausdorff spaces.
Furthermore under this equivalence, the topological $K$-theory corresponds to the operator algebra $K$-theory, via a natural isomorphism 
$${K}^i(X) \cong K_i(C(X))$$
between graded topological $K$-theory groups and the graded operator $K$-theory groups, where $X$ is a compact Hausdorff space. 

Less-familiar perhaps is the adage that the theory of commutative unital real $C \sp *$-algebras is equivalent to the theory of compact Hausdorff spaces with involution; and that the theory commutative non-unital real $C \sp *$-algebras is equivalent to the theory of locally compact Hausdorff spaces with involution. Indeed every real commutative unital $C \sp *$-algebra is isomorphic to 
$$C(X, \tau) = \{ f \in C(X) \mid f(x^\tau) = \overline{f(x)} \}$$
for some compact space with continuous involution $(X, \tau)$; and every real commutative non-unital $C \sp *$-algebra is isomorphic to
$C_0(X, \tau)$ for some locally compact space $X$ with involution (see Theorem~2.7.2 of \cite{Li2003}).
Furthermore, there is a natural isomorphism
$$KR^i(X, \tau) \cong KO_i(C(X, \tau)) \; .$$ 
Consider $i = 0$. On the left side $KR^0(X, \tau)$ is the invariant introduced by Atiyah in \cite{atiyah66}, the Grothiendieck group of the semigroup $V(X, \tau)$ of isomorphism classes of pairs $(E, \sigma)$, where $E$ is a complex vector bundle with projection map $\pi \colon E \rightarrow X$, and where $\sigma$ is a conjugate linear involution on $E$ that intertwines with $\tau$.
On the right side, $KO_0(C(X, \tau))$ is the operator algebra $K$-theory for the real $C \sp *$-algebra $C(X, \tau)$, defined as the Grothiendeck group of the semigroup of equivalence classes of projections in $M_\infty(C(X, \tau))$.

On the sphere $S^d \subset \R^{d+1}$,
there exists a family of nonequivalent involutions $ \tau^{a,b}$ 
where $a+b = d+1$ given by
$$\tau^{a,b} \colon (x_1, \dots, x_a, x_{a+1}, \dots, x_{a+b}) \mapsto (x_1, \dots, x_a, -x_{a+1}, \dots, -x_{a+b}) \; $$
for $x = (x_1, \dots, x_{d+1}) \in S^d$. 

If $a \neq 0$ then the involution $\tau^{a,b}$ on $S^d$ has fixed points, and by performing a sterographic projection on a selected fixed point of $S^d$ we find that $C(S^d, \tau^{a,b})$ is isomorphic to the unitization of $C_0(\R^{d}, \tau^{a-1,b})$. In that case, we have an isomorphism $C_0(\R^d, \tau^{a-1,b}) \cong S^{a-b-1} \R$. Recall that for a real $C \sp *$-algebra $A$, the suspensions and desuspensions are defined by
\begin{align*}
SA &= C_0(\R, A) = \{ f \in C_0(\R, A_\C) \mid f(x) = \overline{f(x)} \} \\
S^{-1} A&= \{ f \in C_0(\R, A_\C) \mid f(-x) = \overline{f(x)} \}  \; . \\
\end{align*}
Then $S^n A$ is defined for any $n \in \Z$ by iterating these operations appropriately.
They satisfy
$$KO_*(S^n A) = \Sigma^{n} KO_*(A) $$
for any $n \in \Z$ by Proposition~1.20 of \cite{Boer2002}. 
Here the suspension $\Sigma^n M$ of a $\Z$-graded module $M$ is defined by $(\Sigma^n M)_k = M_{n+k}$.
Thus 
\begin{equation} \label{KOformula} 
KO_*(C(S^d, \tau^{a,b})) = KO_*(\R) \oplus \Sigma^{a-b-1} KO_*(\R) \; .\end{equation}

If $a = 0$ then $\tau^{0,d+1}$ is the antipodal involution $\fa = \tau^{0,d+1}$, which has no fixed points. This space $(S^d, \fa)$ with the antipodal involution will be the focus of this article.
Let
\[ \fS^d = C(S^d , \fa) = \{ f \in C(S^d, \C) \mid f(-x) = \overline{f(x)} \} \]
denote the associated real $C \sp *$-algebra. Since $\fa$ has no fixed points on the sphere, we cannot carry out the $K$-theory calculation for $\tau^{a,b}$ when $a \neq 0$. However, it turns out that the same formula (\ref{KOformula}) still holds for 
 $\fa = \tau^{a,b} = \tau^{0,d+1}$ as long as $d \geq 2$. This is Theorem~\ref{fS^d}.
 
The cases for $d \leq 1$ are exceptional. We have $\fS^0 \cong \C$ and $\fS^1 \cong T$ where $T$ is the real $C \sp *$-algebra associated with self-conjugate $K$-theory from \cite{Boer2002}. The $K$-theory of both of these real $C \sp *$-algebras are known, given by
\begin{align*}
KO_*(\fS^0) &= (\Z, 0, \Z, 0, \Z, 0, \Z, 0) \\
KO_*(\fS^1) &= (\Z, \Z_2, 0, \Z, \Z, \Z_2, 0, \Z)
 \end{align*}
in degrees $0, 1, \dots, 7$. They clearly do not follow the pattern of Formula~\ref{KOformula}.

We are not aware of any other study of the real $C \sp *$-algebras $\fS^d$ and their $K$-theory $KO_*(\fS^d)$, nor of $KR^*(S^d, \fa)$ for $d \geq 2$.
In this article we will present the calculation of the real $K$-theory of $\fS^d$ for all $d$ (in fact, we will compute the united $K$-theory, which includes the real, complex, and self-conjugate $K$-theory).

Beyond the calculation of the abstract algebraic structure of $KO_*(\fS^d)$, we will identify specific unitary classes representing the non-trivial $K$-theory elements for all $i$ in the low-dimensional cases (for $d \leq 4$); and following that we will describe a recipe to produce unitary elements which represent the key generator classes of $KO_{d+2}(\fS^d)$ in any dimension $d \geq 3$. To explain what we mean by these ``unitary generators", recall that
for a real unital $C \sp *$-algebra $A$, the group $KO_0(A)$ is generated by projection classes $[p]$ where $p \in M_n(A)$. By considering $u = 2p-1$ we can instead formulate $KO_0(A)$ in terms of homotopy classes of self-conjugate unitaries $[u]$ where $u \in M_n(A)$.
The group $KO_1(A)$ is represented by homotopy classes of (not necessarily self-conjugate) unitaries $[u]$ where $u \in M_n(A)$ (these statements hold in the real case exactly the same as in the complex case -- see \cite{rordambookblue}).  Thus elements of $KO_0(A)$ and $KO_1(A)$ can both be represented using equivalence classes of unitaries in matrix algebras over $A$.

For $i \neq 0,1$ in the real case, we often tend to work with $KO_i(A)$ abstractly either in terms of suspension, writing $KO_i(A) = KO_0(S^d A)$ (such as in Definition~1.4.4 of \cite{schroderbook}); or in terms of graded Clifford algebras, writing $KO_i(A) = KO_0(A \otimes \mathcal{C}_{i, 0})$ (such as in Theorem 2.4.10 of \cite{schroderbook}). However, these abstract approaches, while useful for studying the functorial properties of real $K$-theory and for calculating the algebraic structure in examples, make it difficult to represent particular elements of $KO_i(A)$ in a concrete and meaningful way. The papers \cite{BL} and \cite{Boer2020} develop a framework for describing elements in all of the groups $KO_i(A)$, as well as $KU_i(A)$, in very concrete terms, using unitaries in $M_n(A \otimes \C)$ that satisfy certain symmetries. The unitary picture of $KO_i(A)$ from \cite{BL} is summarized in Table~\ref{unitaryTable}, which we describe in more detail now.

\newcommand\TT{\rule{0pt}{2.6ex}} 
\newcommand\BB{\rule[-1.2ex]{0pt}{0pt}} 
\begin{table}[h] 
\caption{$KO_*(A)$ and $KU_*(A)$ via unitaries.} \label{unitaryTable}
\begin{center}
\begin{tabular}{c|c|c|c|c|} 
\hhline{~----}
& K-group \TT \BB & $n_i$ & $\mathscr{S}_i$ & $I^{(i)}$ \\ \hhline{=====}
\multicolumn{1}{|c|}{\multirow{2}{*}{complex}}  & $KU_0(A)$ \TT \BB & 2 & $u = u^*$ & $\sm{\1}{0}{0}{-\1}$ \\ \hhline{~----}
\multicolumn{1}{|c|}{} & $KU_1(A)$ \TT \BB & 1 & -- & $\1$ \\ \hhline{=====}
\multicolumn{1}{|c|}{} & $KO_0(A)$ \TT \BB & 2 &  $u = u^*$, $u\Ttau = u $ & $\sm{\1}{0}{0}{-\1}$  \\ \hhline{~----}
\multicolumn{1}{|c|}{\multirow{8}{*}{real}} & $KO_1(A)$ \TT \BB & 1 &  $u\Ttau = u^* $ & $\1$ \\ \hhline{~----}
\multicolumn{1}{|c|}{} & $KO_2(A)$ \TT \BB & 2 &  $u = u^*$, $u\Ttau = -u$ & $\sm{0}{i \cdot \1}{-i \cdot \1}{0}$ \\\hhline{~----}
\multicolumn{1}{|c|}{}  & $KO_{3}(A)$ \TT \BB & 2 & $u\sTtau = u$ & $ \1_2 $ \\
\multicolumn{1}{|c|}{} & \text{or} & 2 &  $u\Ttau = -u$ & { $\sm{0}{\1}{-\1}{0}$ }\\
 \hhline{~----}
\multicolumn{1}{|c|}{} & $KO_4(A)$ \TT \BB & 4 &  $u = u^*$, $u\sTtau = u$ & ${\diag}(\1_2,-\1_2)$ \\ \hhline{~----}
\multicolumn{1}{|c|}{} & $KO_5(A)$ \TT \BB & 2 &  $u\sTtau = u^*$ & $\1_2$ \\ \hhline{~----} 
\multicolumn{1}{|c|}{} & $KO_6(A)$ \TT \BB & 2 &  $u = u^*$, $u\sTtau = -u$ & $\sm{0}{i \cdot \1}{-i \cdot \1}{0}$ \\ \hhline{~----}
\multicolumn{1}{|c|}{} & $KO_{7}(A)$ \TT \BB & 1 & $u\Ttau = u $ & $\1$ \\ 
\multicolumn{1}{|c|}{} & \text{or} & { 2}  & {  $u\sTtau = -u$} & { $\sm{0}{\1}{-\1}{0}$ }\\
\hhline{=====}
\end{tabular}
\end{center}
\end{table}

For any real $C \sp *$-algebra $A$, let $\tau$ be the corresponding antimultiplicative involution on $A\sc = A \otimes \C$, given by
$\tau \colon a + ib \mapsto a^* + ib^*$ where $a,b \in A$. The real $C \sp *$-algebra $A$ can be recovered from the pair $(A\sc, \tau)$ as the elements in $A\sc$ that satisfy $a^\tau = a^*$. The involution $\tau$ extends to involutions $\rm{Tr} \otimes \tau$ on $M_n(\C) \otimes A\sc = M_n(A\sc)$ and involutions $\sharp \otimes \rm{Tr} \otimes \tau$ on $M_2(\C) \otimes M_n(\C) \otimes A\sc = M_{2n}(A\sc)$.
The involution $\sharp$ on $M_2(\C)$ is the involution associated with the algebra of quaternions $\H \subset M_2(\C)$ and is given by
$$\begin{bmatrix} a & b \\ c & d \end{bmatrix}^\sharp = \begin{bmatrix} d & -b \\ -c & a \end{bmatrix} \; .$$
When we are making explicit calculations with matrices, we always choose an isomorphism $M_2(\C) \otimes M_n(\C) \cong M_{2n}(\C)$ such that
	$$a^{\sharp \otimes \rm{Tr}}
		= \begin{bmatrix} a_{1,1} & a_{1,2} & \cdots & a_{1,n} \\ a_{2, 1} & a_{2,2} & \cdots & a_{2,n} \\
			\vdots & \vdots & \ddots & \vdots \\ a_{n,1} & a_{n,2} & \cdots & a_{n,n}  \end{bmatrix}^{\sharp \otimes \rm{Tr}} 
	 	= 	\begin{bmatrix} a_{1,1}^{\sharp} & a_{2,1}^\sharp & \cdots & a_{n,1}^\sharp \\
			 \\ a_{1,2}^\sharp & a_{2,2}^\sharp & \cdots & a_{n,2}^\sharp \\
			\vdots & \vdots & \ddots & \vdots \\ a_{1,n}^\sharp & a_{2,n}^\sharp & \cdots & a_{n,n}^\sharp \end{bmatrix}
		$$
where each $a_{i,j} \in M_2(\C)$.
		
For a unital real $C \sp *$-algebra, the elements of $KO_i(A)$ are represented by unitaries in the complexification $M_{n}(A\sc)$ that satisfy the relation $\mathscr{S}_i$ given in the table (and where $n$ is a multiple of $n_i$). The special unitary $I^{(i)}$ in the table represents a neutral element, which satisfies $[ I^{(i)} ] = 0$ in $KO_i(A)$.  The identification $[u] = [\diag(u, I^{(i)})] \in KO_i(A)$ shows how to identify a unitary in $M_n(A\sc)$ with a unitary in $M_{n+n_i}(A\sc)$ representing the same $KO$-class. 

There are two lines in the table for each of $KO_{3}(A)$ and $KO_7(A)$. The first line represents the picture of $KO_i(A)$ developed in \cite{BL} and also used in \cite{Boer2020}. The second line represents a different variation of the unitary picture of $KO_i(A)$ that we will find more convenient later in this present work.
This variation is described in detail in the appendix in Section~\ref{newunitaries}. The isomorphism between the two unitary pictures of $KO_3$ and $KO_7$ includes a constructive description of how to convert a unitary in one picture to a unitary in the the other picture. This alternate picture of unitaries-based $KO$-theory will sometimes be used in Section~\ref{Section:S^n unitaries} to describe the generators $g_d$ in $KO_{d+2}(\fS^d)$ for all $d$.

Our results in Section~\ref{Section:S^n unitaries}  extend to the real case, and heavily rely on, the results of \cite{SBS}. 
There, Schulz-Baldes and Stoiber describe a recipe for representing the non-trivial elements of the complex $K$-theory $K_i(C(S^d))$ for all $d \geq 1$ in terms of self-adjoint unitaries (for $i = 0$) and unitaries (for $i = 1$).
 Briefly, for a positive odd integer $d$, first write down a set of Clifford generators $\Gamma_{1, d}, \dots \Gamma_{d,d}$, which are matrices in $M_{n}(\C)$ where
 $n = 2^{(d-1)/2}$ that satisfy 
 $$\Gamma_{i, d} \Gamma_{j,d} +  \Gamma_{j, d} \Gamma_{i,d}  = 2 \delta_{i,j} \; .$$
  Then we obtain a self-adjoint unitary $Q_{d-1} \in C(S^{d-1}, M_n(\C))$ and a unitary $U_d \in C(S^{d}, M_n(\C))$ by
\begin{align} \label{QandU}
	Q_{d-1}(x) &= \sum_{i=1}^{d} x_i \Gamma_{i,d} \\
	 \text{and} \quad 
	U_d(x) &= \sum_{i=1}^d x_i \Gamma_{i,d}  + x_{d+1} i \,  I   \; .  \nonumber
\end{align}
Then it is proven in Proposition~1 of \cite{SBS} that the elements
$$[Q_{d-1}] \in \widetilde{K}_0(C(S^{d-1}, \C)) = \Z \quad \text{and} \quad
[U_d] \in \widetilde{K}_1(C(S^d, \C) ) = {K}_1(C(S^d, \C) ) = \Z$$ represent generators of the respective $K$-theory groups.

We will use a variation the construction of \cite{SBS} to obtain unitaries that represent the key generator $g_d \in KO_{d+2}(\fS^d) = \Z$ for all $d$. Of course, the construction will vary depending on the value of $d$ modulo $8$, because the symmetry that the unitary must satisfy varies depending on $d$. Our recipe will start with the Clifford generators $\Gamma_{1, d}, \dots \Gamma_{d,d}$ and then modify them to present a new set of Clifford generators
$\Up_{1, d}, \dots \Up_{d,d}$ that has the symmetry properties needed to produce the correct unitary classed $Q_{d-1}\pr$ and $U_d\pr$
using the same formal expression as in (\ref{QandU}) in the complex case. When not considering extra symmetry structures associated with real $K$-theory, the generators $\Up_{1, d}, \dots \Up_{d,d}$ are equivalent to $\Gamma_{1, d}, \dots \Gamma_{d,d}$ by a unitary equivalence. So the complex $K$-theory elements $[U]$ and $[Q]$ are the same when using $\Up_{i,d}$ as when using $\Gamma_{i,d}$ in the formulas in (\ref{QandU}).

Our results are related to results in \cite{JosephMeyer} in which some generators of the real $K$-theory of spheres with involution are also identified explicitly. That work utilizes the van Daele picture of $K$-theory, which is not precisely the same as the unitary picture of elements from \cite{BL} that we are using here. In the van Daele picture, $K$-theory classes are represented by odd self-adjoint unitaries in the graded algebra $\mathcal{K} \otimes A \otimes \mathcal{C}\ell_{p,q}$ where $\mathcal{C}\ell_{p,q}$ is a graded Clifford algebra. By contrast, in the unitary picture the we use here, developed in \cite{BL} and summarized above, all $K$-theory classes are represented by unitaries (self-adjoint unitaries in the even cases and not-necessarily-self-adjoint unitaries in the odd cases) satisfying certain symmetry relations in matrix algebras over the complexification of $A$. This avoids the need to move the category of graded $C \sp *$-algebras and also avoids the need to consider Clifford algebras a priori (though we will certainly make extensive use of Clifford generators in our constructions). Furthermnore, in \cite{JosephMeyer}, they specifically restrict the involution $\tau^{a,b}$ to the case $a \geq 1$ on sphere $S^d$, whereas we specifically focus on the case $a = 0$. (We ourselves will consider the cases for $a \geq 1$ in a separate forthcoming paper.)

The unitary picture of $KO_i(A)$ is largely motivated by its utility in physics. Formulas used to detect topological markers associated with band gaps can be developed and best-understood in a $K$-theory framework. Furthermore, the symmetries of the underlying model determine what kind of topological markers can be present. This involves potential spatial symmetries as well as time-symmetry. Mathematically this is best-understood in terms of the real $K$-theory of the appropriate space-with-involution parametrizing the model. Although the correct formulas can often be obtained by experienced guess-work, the $K$-theory classes of the appropriate $C \sp *$-algebra give them a mathematical account. This perspective has motivated the work of \cite{SBS} in the complex case, as well as the work in \cite{BL} and \cite{Boer2020} discussed above, and the work in this paper.

Such formulas are developed into algorithms that are used to detect topological markers in physical models such as in \cite{bradlyn},  \cite{Li...2025}, and \cite{Loring2015}.  These methods are used to study topological insulators and more recently these methods have been proposed to classify topology in photonic systems as in \cite{CL-2022}, \cite{CL2024}, \cite{CLSB-2024}, \cite{DLC-2023}.
We wish to be able to address physical models in a variety of possible dimensions, with a variety of symmetry structures, which may or may not contain time-reversal symmetry, and which may or may not contain particle-hole symmetry. The formulas developed especially in Section 3 are   topological markers that could be used as topological markers in low-dimension models. Thus this article extends the range of such models that can be addressed when the key symmetries correspond with the antipodal map of the sphere.

For example, in \cite{Li...2025} a $K$-theoretic topological marker was developed for a physical model involving a $C_2 \mathcal{T}$ symmetry.  Here $C_2$ is a rotational symmetry and $\mathcal{T}$ is a time-reversal symmetry and the composition $C_2 \mathcal{T}$ commutes with the Hamiltonian $H$ and anit-commutes with two position operators $X$ and $Y$. Thus the system is associated with a sphere with a $\tau^{2,1}$ involution. One can easily imagine scenarios which would be modeled by the $\tau^{0,2}$ involution on $S^1$ or the $\tau^{0,3}$ involution on $S^2$, and these would fall into the situation discussed in this paper. 

The antipodal map is a relatively special case in the space of involutions on the sphere. Our focus on that case in this paper was motivated primarily by the expediency of restricting the problem to a manageable size. This special case seemed to us to have special mathematical interest and is likely to be useful for physical applications. However, we plan to continue this project and in a forthcoming paper, we will extend our formulas to include unitaries which represent $KO_*$ classes for spheres equipped with all possible involutions $\tau^{a,b}$, besides just the antipodal map. There will also be value in expanding this work to include the various tori with involutions, as was done for van Daele $K$-theory in \cite{JosephMeyer}.

\subsection{Acknowledgements}

This work was supported in part by the Laboratory Directed Research and Development program at Sandia National Laboratories.
Also, this work was performed, in part, at the Center for Integrated Nanotechnologies, an Office of Science User Facility operated for the U.S. Department of Energy (DOE) Office of Science by Los Alamos National Laboratory (Contract 89233218CNA000001) and Sandia National Laboratories (Contract DE-NA-0003525).

\section{Abstract $K$-theory and $K$-homology groups}

In this section, we will compute the algebraic structure of $KO_*(\fS^d)$ for all $d$. But first, we will review the fundamentals of united $K$-theory $K\crt(A)$ and $K\crr(A)$ for a real $C \sp *$-algebra, introduced in \cite{Boer2002} and \cite{bousfield90}.
These two variations of united $K$-theory are often used to augment the real $K$-theory $KO_*(A)$ for a variety of reasons. One reason is that united $K$-theory (in either version) is a more complete invariant: $KO_*(A)$ by itself does not classify real $C \sp *$-algebras up to $KK$-equivalence, but $K\crr(A)$ and $K\crt(A)$ do so (for real $C \sp *$-algebras in the real bootstrap category) by \cite{Boer2004}. In the category of real Kirchberg algebras in the bootstrap category, united $K$-theory even classifies real $C \sp *$-algebras up to isomorphism.
Furthermore, when calculating, it is often very convenient to consider $KO_*(A)$ as part of $K\crr(A)$ or $K\crt(A)$, since the additional structure of united $K$-theory, in the category of $\mathcal{CRT}$-modules, can be used to obtain information of $KO_*(A)$.

For a real $C\sp*$-algebra $A$, we define united $K$-theory (with two variations) by
\begin{align*}
	K\crr(A) &= \{ KO_*(A), KU_*(A) \} \\
	K\crt(A) &= \{ KO_*(A), KU_*(A), KT_*(A) \}
\end{align*}
where $KO_*(A)$ is the usual period-8 real $K$-theory of $A$ and $KU_*(A) = K_*(\C \otimes A)$ is the $K$-theory of the complexification of $A$. 
The ``self-conjugate" $K$-theory, which is included the full variation of united $K$-theory, is defined by $KT_*(A) = K_*(T \otimes A)$ where $T$ is the algebra defined in \cite{Boer2002} and which turns out to be identical to what we are calling $\fS^1$ in this article.

Recall that $KO_*(A)$ has the structure of a graded module over the ring $KO_*(\R)$ where the groups of this ring are given by
\[ KO_*(\R) = (\Z , ~\Z_2 ,~ \Z_2 ,~ 0  ,~ \Z  ,~ 0  ,~ 0  ,~ 0)  \]
in degrees 0 through 7. 

In particular, 
multiplication by the non-trivial element $\eta$ of $KO_1(\R) \cong \Z_2$
 induces a natural transformation
$\eta \colon KO_i(A) \rightarrow KO_{i+1}(A) .$
We note that $\eta$ satisfies the relations $2 \eta = 0$ and $\eta^3 = 0$, both as an element of the ring $KO_*(\R)$ and as a natural transformation. There is also a non-trivial element $\xi \in KO_4(\R)$, and corresponding natural transformation
$\xi \colon KO_i(A) \rightarrow KO_{i+4}(A)$. It satisfies $\xi^2 = 4 \beta\so$ where $\beta\so$ is the real Bott periodicity isomorphism of degree 8.

Complex $K$-theory $KU_*(A)$ has the structure of a module over $KU_*(\R) = K_*(\C)$, but the only natural transformation which arises from this structure is the degree 2 Bott periodicity map $\beta\su$. There is, however, a natural transformation $\psi \colon KU_*(A) \rightarrow KU_*(A)$ that arises from the conjugation map $\psi \colon \C \otimes A \rightarrow \C \otimes A$ defined by $a + ib \mapsto a - ib$.

In addition, there are natural transformations
\begin{align*}
c \colon & KO_*(A) \rightarrow KU_*(A) \\
r \colon & KU_*(A) \rightarrow KO_*(A) 
\end{align*}
which are induced by the natural inclusion maps $\R \hookrightarrow \C$ and $\C \hookrightarrow M_2(\R)$, respectively. 

Taken together, these natural transformations satisfy the following set of relations:
\begin{align*}
\label{eq:natural-transformations}
rc &= 2  & cr &= 1 + \psi &2 \eta &= 0  \notag \\
\eta r &= 0 & c \eta&= 0 &  \eta^3 &= 0    \notag \\
r \psi &= r & \psi^2 &= \id &  \xi^2 &= 4 \beta\so \\ 
\psi c &= c & \psi \beta\su &= -\beta\su \psi & \xi &= r \beta\su^2 c \notag 
\end{align*} 
The natural transformations also combine to form a long exact sequence
\begin{equation}
\label{eq:CR-LES}
\dots \rightarrow
KO_{i}( A) \xrightarrow{\eta} 
KO_{i+1}( A) \xrightarrow{c} 
KU_{i+1}( A) \xrightarrow{r \beta\su^{-1}} 
KO_{i-1}(A) \rightarrow
\cdots  \; .
\end{equation}

There is even more structure to $K\crt(A) = \{ KO_*(A), KU_*(A), KT_*(A) \}$, including several natural transformations that involve the self-conjugate $K$-theory and two additional exact sequences that involve self-conjugate $K$-theory. We will not review this in detail, but refer the reader to \cite{Boer2002} and \cite{bousfield90}.

For us, the two important facts about united $K$-theory are the following. First, if $A$ and $B$ are real $C \sp *$-algebras such that $A \otimes \C$ and $B \otimes \C$ are in the bootstrap category $\mathcal{N}$ defined in \cite{schochet87}, then $A$ and $B$ are $KK$-equivalent if and only if $K\crt(A) \cong K\crt(B)$, if and only if $K\crr(A) \cong K\crr(B)$. This statement follows from the Universal Coefficient Theorem for real $C \sp *$-algebras in \cite{Boer2004} together with the algebraic work of \cite{hewitt} on abstract $\mathcal{CRT}$-modules.

Secondly, there is a very nice abstract characterization of free $\mathcal{CRT}$-modules from \cite{bousfield90}, which was crucial in the proving the K\"unneth Formula for real $C \sp *$-algebras in \cite{Boer2002}, and will also be useful here in our computations. 
Namely, for any real $C \sp *$-algebra, $K\crt(A)$ is free (as a $\mathcal{CRT}$-module) if and only if the complex part $KU_*(A)$ is free (as a graded abelian group), if and only if $K\crt(A)$ is isomorphic to a direct sum of suspensions of $K\crt(\R)$, $K\crt(\C)$, and $K\crt(T)$. 

Since $KU_*(\fS^d) = K_*(C(S^d))$ is free (as a graded abelian group with period 2), it follows that $K\crt(\fS^d)$ is a free $\mathcal{CRT}$-module for all $d$. So
$K\crt(\fS^d)$ can be expressed as a direct sum of $K\crt(\R)$,  $K\crt(\C)$, and $K\crt(T)$ and their suspensions. Here we will focus on the smaller version of united $K$-theory $K\crr(\fS^d)$, for which a similar statement is true, namely that $K\crr(\fS^d)$ can be expressed as a direct sum of $K\crr(\R)$,  $K\crr(\C)$, and $K\crr(T)$ and their suspensions. For reference, we display the groups of these $\mathcal{CR}$-modules in Table~\ref{crrRCT}.
\begin{table}[h]  \label{crrRCT}
\caption{The free $\mathcal{CR}$-modules}
$$K\crr(\R)$$
\[ \begin{array}{|c|c|c|c|c|c|c|c|c|c|}  
\hline \hline  
n & ~0~ & ~1~ & ~2~ & ~3~ & ~4~ & ~5~ & ~6~ & ~7~  \\
\hline  \hline
KO_n
& \Z  & \Z_2  & \Z_2  & 0 
	& \Z  & 0 & 0 & 0  \\
\hline  
KU_n 
& \Z  & 0 & \Z  & 0 & \Z   & 0 
	& \Z  & 0   \\
\hline \hline
\end{array} \]

\vspace{1cm}

$$K\crr(\C)$$
\[ \begin{array}{|c|c|c|c|c|c|c|c|c|c|}  
\hline  \hline 
n & \makebox[1cm][c]{0} & \makebox[1cm][c]{1} & 
\makebox[1cm][c]{2} & \makebox[1cm][c]{3} 
& \makebox[1cm][c]{4} & \makebox[1cm][c]{5} 
& \makebox[1cm][c]{6} & \makebox[1cm][c]{7}  \\
\hline  \hline
KO_n 
& \Z & 0 & \Z & 0 & \Z & 0 & \Z & 0   \\
\hline  
KU_n 
& \Z \oplus \Z & 0 & \Z \oplus \Z & 0 & \Z \oplus \Z & 0 
	& \Z \oplus \Z  & 0   \\
\hline \hline
\end{array} \]

\vspace{1cm}

$$K\crr(T)$$
\[ \begin{array}{|c|c|c|c|c|c|c|c|c|c|}  
\hline  \hline 
n & \makebox[1cm][c]{0} & \makebox[1cm][c]{1} & 
\makebox[1cm][c]{2} & \makebox[1cm][c]{3} 
& \makebox[1cm][c]{4} & \makebox[1cm][c]{5} 
& \makebox[1cm][c]{6} & \makebox[1cm][c]{7}  \\
\hline  \hline
KO_n 
& \Z & \Z_2& 0 & \Z & \Z & \Z_2& 0 & \Z  \\
\hline  
KU_n 
& \Z & \Z  & \Z & \Z & \Z & \Z & \Z & \Z \\
\hline \hline
\end{array} \]
\end{table}

We now turn to the computation of $K\crr(\fS^d)$. We already know $\fS^0 \cong \C$ and $\fS^1 \cong T$ where $T$ is the ``self-conjugate" circle algebra, and for both of these the $K$-theory is displayed in Table~\ref{crrRCT}. The structures of $K\crr(\fS^d)$ for $d \geq 2$ are given in Theorem~\ref{fS^d} below, and shown explicitly in Table~\ref{lowdim} below for $d = 2,3,4$. 

\begin{thm} \label{fS^d} 
For $d \geq 2$ we have
\begin{align*} K\crt( \fS^d) &\cong
	 K\crt(\R) \oplus K\crt(S ^{-d-2} \R) \\
	 &\cong  K\crt(\R) \oplus \Sigma^{-d-2} K\crt(\R) \; . \end{align*}
The first summand is the injective image of $(\ve_d)_*$ where $\ve_d \colon \R \rightarrow \fS^d$ is the unital inclusion.
\end{thm}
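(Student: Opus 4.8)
The plan is to compute $K\crt(\fS^d)$ by induction on $d$, using the established fact (cited in the excerpt) that $K\crt(\fS^d)$ is a free $\mathcal{CRT}$-module — hence a finite direct sum of suspensions of $K\crt(\R)$, $K\crt(\C)$, and $K\crt(T)$ — so that only the \emph{ranks} and \emph{degrees} of the summands need to be pinned down. Since a free $\mathcal{CRT}$-module is determined by its complex part $KU_*$ together with the action of the natural transformations, the strategy is: first nail down $KU_*(\fS^d)$ from classical topology, then identify which free summands are forced, and finally locate the extra summand in degree $-d-2$.

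First I would establish the complex part. We have $KU_*(\fS^d) = K_*(\C \otimes \fS^d)$, and $\C \otimes \fS^d = \C \otimes C(S^d, \fa) \cong C(S^d, \C)$ because complexifying $C(X,\tau)$ forgets the involution; thus $KU_*(\fS^d) \cong K_*(C(S^d)) = \widetilde K^{-*}(S^d) \oplus K_*(\text{pt})$, which is $(\Z^2, 0)$ for $d$ even and $(\Z,\Z)$ for $d$ odd (in the two $\Z/2$-graded slots), always free of total rank $2$. Comparing with Table~\ref{crrRCT}: $K\crr(\R)$ has complex part of rank $1$ per period, $K\crr(\C)$ has rank $2$, and $K\crr(T)$ has rank $1$. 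Since $K\crt(\fS^d) \supseteq (\ve_d)_* K\crt(\R)$ as a direct summand (the unital inclusion $\ve_d\colon \R \to \fS^d$ is split by evaluation at any point, giving a splitting of $\mathcal{CRT}$-modules), one summand is exactly a copy of $K\crt(\R)$ with complex rank $1$; the remaining complementary free summand has complex part of rank $1$, so it is either a suspension of $K\crt(\R)$ or a suspension of $K\crt(T)$.

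Next I would determine which, and in what degree, by a direct calculation of $\widetilde{KO}_*(\fS^d)$ in enough degrees — most cheaply via the long exact sequence $(\ref{eq:CR-LES})$ relating $KO_*$, $KU_*$, and the map $\eta$, together with the action of $\psi$ on $\widetilde{KU}_*(\fS^d) \cong \widetilde K^{-*}(S^d)$. The conjugation $\psi$ on $\C \otimes \fS^d$ corresponds to the map of $C(S^d)$ induced by the antipodal map on $S^d$ (this is the content of the self-conjugate structure), and on reduced complex $K$-theory the antipodal map acts on $\widetilde K^0(S^d) = \Z$ by $(-1)^{?}$ — more precisely by $+1$ or $-1$ according to the parity of $d/2$, which I would read off from the standard computation of the degree of the antipodal map composed with Bott periodicity. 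The cases $\psi = +1$ versus $\psi = -1$ on the generator distinguish the $K\crt(\R)$-type summand from the $K\crt(T)$-type summand; comparing against Table~\ref{crrRCT} and tracking the suspension degree forces the reduced summand to be $\Sigma^{-d-2}K\crt(\R)$. A clean way to fix the degree without a case analysis is to use the known answer for $\fS^1 \cong T$ and the $K\crr$ of $\fS^0 \cong \C$ as base checks, and to run the induction via a cofiber-type or Mayer–Vietoris argument decomposing $S^d$ with its free $\Z/2$-action (e.g. as two hemispheres glued along $S^{d-1}$, equivariantly), producing a long exact sequence that shifts the reduced generator's degree by exactly $-1$ at each step; starting from $\widetilde{KO}_*$ of $\fS^1$ with its generator in degree $3 = 1+2$, induction gives the generator of the reduced part of $\fS^d$ in degree $d+2$, i.e. the reduced summand is $\Sigma^{-d-2}K\crt(\R)$.

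The main obstacle I expect is getting the \emph{degree} and the \emph{self-conjugate type} of the reduced summand exactly right — i.e. ruling out that it is a suspension of $K\crt(T)$ and confirming it is $\Sigma^{-d-2}K\crt(\R)$ rather than some other suspension of $K\crt(\R)$. Freeness tells us it is \emph{some} suspension of $K\crt(\R)$ or $K\crt(T)$, but distinguishing these requires genuinely computing the action of $\psi$ (or equivalently $r$ and $c$) on a generator, and tracking the suspension shift carefully through whichever exact sequence drives the induction; an off-by-a-period or off-by-a-sign error here would give the wrong module. The hypothesis $d \geq 2$ is essential precisely because $\fS^1 = T$ is \emph{not} of this form (its reduced part is a suspension of $K\crt(T)$, not of $K\crt(\R)$), so the inductive step must start at $d = 2$ and the base case $d=2$ will need to be verified by hand.
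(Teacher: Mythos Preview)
There is a genuine gap in your argument for the splitting. You claim that $\ve_d \colon \R \to \fS^d$ is split by evaluation at a point, but this is false: the antipodal involution $\fa$ on $S^d$ has \emph{no} fixed points, so evaluation at any $x_0 \in S^d$ only gives a real $C^*$-algebra homomorphism $\fS^d \to \fS^0 \cong \C$, not to $\R$. The composite $\R \xrightarrow{\ve_d} \fS^d \to \C$ is then the standard inclusion, and the induced map $K\crt(\R) \to K\crt(\C)$ is certainly not an isomorphism (for instance $KO_1(\R) = \Z_2$ while $KO_1(\C) = 0$), so no left inverse to $(\ve_d)_*$ arises this way. Without this splitting your subsequent reduction to a rank-one reduced summand collapses, and for odd $d$ you cannot a priori exclude the possibility $K\crt(\fS^d) \cong \Sigma^a K\crt(T)$, since $K\crt(T)$ also has complex part of total rank~$2$ and admits an injective (but non-split) map from $K\crt(\R)$.

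The paper obtains the splitting differently. It uses the ``equator'' evaluation $\pi \colon \fS^d \to \fS^1$, and the factorization $\ve_1 = \pi \circ \ve_d$ together with the known injectivity of $(\ve_1)_*$ in low degrees, to show that $(\ve_d)_*$ is injective; freeness of $K\crt(\fS^d)$ is then invoked to produce a left inverse at the level of $\mathcal{CRT}$-modules. The base case $d=2$ is handled by the explicit short exact sequence $0 \to S^2\C \to \fS^2 \xrightarrow{\pi} \fS^1 \to 0$, whose long exact sequence in $KO_*$ yields $KO_0(\fS^2) = \Z^2$ and $KO_1(\fS^2) = \Z_2$, forcing $K\crt(\fS^2) \cong K\crt(\R) \oplus \Sigma^{-4} K\crt(\R)$. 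The inductive step for $d \geq 3$ uses the analogous sequence $0 \to S^d\C \to \fS^d \to \fS^{d-1} \to 0$; once the $K\crt(\R)$ summands on both sides are matched and cancelled, one reads off $\widetilde{KO}_{d-2}(\fS^d) = \Z$ and $\widetilde{KO}_{d-1}(\fS^d) = 0$, which among the candidates $\Sigma^a K\crt(\R)$ (with $a \equiv d \bmod 2$) pins down $a = -d-2$. Note in particular that once the splitting is in hand, your worry about distinguishing $\Sigma^a K\crt(\R)$ from $\Sigma^a K\crt(T)$ evaporates: for $d \geq 2$ the reduced complex part $\widetilde{KU}_*(\fS^d)$ is $\Z$ concentrated in a \emph{single} parity, which is incompatible with any suspension of $K\crt(T)$ (whose complex part is $\Z$ in every degree). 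Your proposed $\psi$-computation is therefore unnecessary, and the Mayer--Vietoris sketch you offer is both vaguer and harder to get right than the paper's explicit equator sequence.
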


In part, this result states that $K\crt(\fS^d)$ is a free $\mathcal{CRT}$-module with free generators in the real part in degrees 0 and $d+2$.
The first summand is generated (as a $\mathcal{CRT}$-module) by the class of the unit projection $[1] \in KO_0(\fS^d)$ and the second summand is generated by an element in $KO_{d+2}(\fS^d)$, though there may not be a canonical choice of such an generator.

\begin{table} 
\caption{$K\crr(\fS^d)$ for $2 \leq d \leq 4$}  \label{lowdim}

%
%
%
%
%
%

$$K\crr(\fS^2)$$
$$\begin{array}{|c|c|c|c|c|c|c|c|c|}  
\hline \hline  
n & \makebox[1cm][c]{0} & \makebox[1cm][c]{1} & 
\makebox[1cm][c]{2} & \makebox[1cm][c]{3} 
& \makebox[1cm][c]{4} & \makebox[1cm][c]{5} 
& \makebox[1cm][c]{6} & \makebox[1cm][c]{7}  \\

\hline  \hline
KO_n(\fS^2)
& \Z^2  & \Z_2  & \Z_2 & 0 
& \Z^2  & \Z_2  & \Z_2 & 0  \\
\hline  
KU_n(\fS^2)
& \Z^2 & 0 & \Z^2 & 0 & \Z^2 & 0 & \Z^2 & 0  \\

\hline \hline
\end{array}$$

\vspace{1cm}

$$K\crr(\fS^3)$$
$$\begin{array}{|c|c|c|c|c|c|c|c|c|}  
\hline \hline  
n & \makebox[1cm][c]{0} & \makebox[1cm][c]{1} & 
\makebox[1cm][c]{2} & \makebox[1cm][c]{3} 
& \makebox[1cm][c]{4} & \makebox[1cm][c]{5} 
& \makebox[1cm][c]{6} & \makebox[1cm][c]{7}  \\

\hline  \hline
KO_n(\fS^3)
& \Z & \Z_2 \oplus \Z & \Z_2 & 0 
& \Z & \Z & \Z_2 & \Z_2  \\
\hline  
KU_n(\fS^3)
& \Z & \Z & \Z & \Z& \Z & \Z& \Z & \Z  \\

\hline \hline
\end{array}$$

\vspace{1cm}

$$K\crr(\fS^4)$$
$$\begin{array}{|c|c|c|c|c|c|c|c|c|}  
\hline \hline  
n & \makebox[1cm][c]{0} & \makebox[1cm][c]{1} & 
\makebox[1cm][c]{2} & \makebox[1cm][c]{3} 
& \makebox[1cm][c]{4} & \makebox[1cm][c]{5} 
& \makebox[1cm][c]{6} & \makebox[1cm][c]{7}  \\

\hline  \hline
KO_n(\fS^4)
& \Z \oplus \Z_2 & \Z_2 & \Z_2 \oplus \Z & 0 
& \Z & 0 & \Z & \Z_2  \\
\hline  
KU_n(\fS^4)
& \Z^2 & 0 & \Z^2 & 0 & \Z^2 & 0 & \Z^2 & 0  \\

\hline \hline
\end{array}$$
\end{table}

\begin{proof}
As discussed above, since $K\crt( \fS^d)$ is a free $\CRT$-module, $K\crt(\fS^d)$ must be isomorphic to a direct sum of suspensions of the free $\CRT$-modules $K\crt(\R), K\crt(\C),$ and $K\crt(T)$. Furthermore the complex part of $K\crt(\fS^d)$ must satisfy
$$KU_*( \fS^d) = KU_*(C(S^d, \C)) = \begin{cases} (\Z^2, 0) & \text{if $d$ is even } \\ (\Z, \Z) & \text{if $d$ is odd,} \end{cases} $$
and when we compare this to the complex part of the $\CRT$-modules in Table~\ref{crrRCT}, there are few possibilities for $K\crt(\fS^d)$.


In particular, in the case that $d$ is even,
the only possibilities that are consistent the complex part of $K\crt(\fS^d)$ are
$K\crt( \fS^2) \cong K\crt(\C)$ or $K\crt(\fS^2) = \Sigma^a K\crt(\R) \oplus \Sigma^b K\crt(\R)$ where $a,b$ are both even.
In the case that $d$ is odd,
the only possibilities that are consistent the complex part of $K\crt(\fS^d)$ are
$K\crt( \fS^2) \cong K\crt(T)$ or $K\crt(\fS^2) = \Sigma^a K\crt(\R) \oplus \Sigma^b K\crt(\R)$ where $a$ is even and $b$ is odd.

Let $d = 2$ and consider the map $\pi \colon \fS^2 \rightarrow \fS^1$ given by evaluation on the equator.
The kernel $\pi$ is the ideal of functions on the sphere that vanish on the equator and satisfy $f(-x) = -f(x)$. Since such functions are determined by their values on the top half of the sphere, we have $\ker \pi \cong S^2 \C$. Thus we have the short exact sequence
$$0 \rightarrow S^2 \C \xrightarrow{i} \fS^2 \xrightarrow{\pi} \fS^1 \rightarrow 0 \;. $$
Using the fact that $KO_*(S^2 \C) = (\Z, 0)$ a part of the resulting long exact sequence on real $K$-theory is
$$KO_1(S^2 \C) \rightarrow KO_1( \fS^2) \xrightarrow{\pi_*} KO_1( \fS^1) \xrightarrow{\partial_1} KO_0(S^2 \C) \xrightarrow{i_*} 
	KO_0( \fS^2) \xrightarrow{\pi_*} KO_0( \fS^1) \rightarrow KO_{-1}(S^2\C) $$
or 
$$0 \rightarrow KO_1( \fS^2) \xrightarrow{\pi_*} \Z_2 \xrightarrow{\partial_1} \Z \xrightarrow{i_*} 
	KO_0( \fS^2) \xrightarrow{\pi_*} \Z \rightarrow 0 \; .$$
It follows from this that $\partial_1 = 0$ and $KO_0(\fS^2) = \Z^2$ and $KO_1(\fS^2) = \Z_2$. An identical analysis, at the portion of the long exact sequence starting with $KO_5(S^2 \C) = 0$, yields $KO_4(\fS^2) = \Z^2$ and $KO_5(\fS^2) = \Z_2$. From this information, in light of the possibilities discussed in the previous paragraph, it follows that $K\crt(\fS^2) = K\crt(\R) \oplus \Sigma^4 K\crt(\R)$.

Now, observe that $\ve_1 = \pi_{\rm } \ve_2$. We know from Section~2.1 of \cite{Boer2002} that $\ve_1$ induces an isomorphism  $KO_i(\R) \rightarrow KO_i(\fS^1)$ for $i = 0,1$. It follows from this that $\ve_2$ induces an injective map $KO_i(\R) \rightarrow KO_i(\fS^2)$ for $i = 0,1$ which then implies (given the structure of $K\crt(\fS^2)$ identified in the previous paragraph) that $(\ve_2)_*$ is injective, carrying $K\crt(\R)$ isomorphically onto the first shown summand of $K\crt(\fS^2)$. This proves the theorem for $d= 2$.

Now for $d > 2$ we use a generalized ``equator" evaluation map $\pi_{\rm{}} \colon \fS^d \rightarrow \fS^2$ and the identity $\ve_2  = \pi \ve_d$. Since $(\ve_2)_*$ is injective, so is $(\ve_d)_*$, carrying $K\crt(\R)$ to a submodule of $K\crt(\fS^d)$. Again, given the limited possibilities for the structure of $K\crt(\fS^d)$ discussed in the second paragraph of this proof, it follows that there is a decomposition of $K\crt(\fS^d)$ as
$$K\crt(\fS^d) \cong K\crt(\R) \oplus \Sigma^b K\crt(\R) \; $$
where the first summand is the image of $(\ve_d)_*$ and where $b$ has the same parity as $d$.
We call the second summand $\widetilde{K}\crt(\fS^d)$, though there may not be a canonical choice of the inclusion $\widetilde{K}\crt(\fS^d) \rightarrow K\crt(\fS^d)$.

We will complete the proof by showing $b = -d-2$.
As in the $d= 2$ case above, consider the short exact sequence 
$$0 \rightarrow S^d \C \xrightarrow{i} \fS^d \xrightarrow{\pi} \fS^{d-1} \rightarrow 0 \;$$
and the resulting long exact sequence
$$ \cdots \rightarrow K\crt( S^d \C) \xrightarrow{i_*} K\crt(\fS^d) \xrightarrow{\pi_*} K\crt(\fS^{d-1}) \xrightarrow{\partial} 
	K\crt( S^d \C) \xrightarrow{} \cdots  \; .$$
As we have seen, both $K\crt(\fS^d)$ and $K\crt(\fS^{d-1})$ have a summand isomorphic to $K\crt(\R)$ and $\pi_*$ restricts to an isomorphism on these summands. 
Eliminating these summands from the long exact sequence, we obtain
$$ \cdots \rightarrow K\crt( S^d \C) \xrightarrow{i_*} \widetilde{K}\crt(\fS^d) \xrightarrow{\pi_*} \widetilde{K}\crt(\fS^{d-1}) \xrightarrow{\partial} 
	K\crt( S^d \C) \xrightarrow{} \cdots  \; .$$
(Here we make an appropriate choice of the complimentary summands in such a way that $\pi_*$ carries $ \widetilde{K}\crt(\fS^d)$ to $ \widetilde{K}\crt(\fS^{d-1})$.)
	
We know that $\widetilde{K}\crt(\fS^2) \cong \Sigma^{-4} K\crt(\R) = K\crt(S^{-4} \R)$. Assume for induction that $\widetilde{K}\crt(\fS^{d-1}) \cong K\crt(S^{-d-1} \R)$ for $d \geq 3$. So
$$\widetilde{KO}_*(\fS^d) = (\Z, \Z_2, \Z_2, 0, \Z, 0, 0, 0)$$
in degrees $d+1, \dots, d+8 \pmod 8$.
Then the long exact sequence above has 
\begin{multline*} \cdots \rightarrow 
	{KO}_{d-1}(S^d \C) \rightarrow
	\widetilde{KO}_{d-1}(\fS^d) \rightarrow
	\widetilde{KO}_{d-1}(\fS^{d-1}) \rightarrow  \\
{KO}_{d-2}(S^d \C) \rightarrow
	\widetilde{KO}_{d-2}(\fS^d) \rightarrow
	\widetilde{KO}_{d-2}(\fS^{d-1}) \rightarrow \cdots 
\end{multline*}
which can be rewritten, using the induction hypothesis, as
\[  \cdots \rightarrow 0 \rightarrow
	\widetilde{KO}_{d-1}(\fS^d) \rightarrow
	0  \rightarrow  
\Z  \rightarrow
	\widetilde{KO}_{d-2}(\fS^d) \rightarrow
	0 \rightarrow \cdots 
\]

From this it follows immediately that $\widetilde{KO}_{d-2}(\fS^d)  = \Z$ and $\widetilde{KO}_{d-1}(\fS^d) = 0$.
Then the only possibility consistent with this information is
$$\widetilde{KO}_*(\fS^d) = (\Z, \Z_2, \Z_2, 0, \Z, 0, 0, 0)$$
in degrees $d+2, \dots, d+9 \pmod 8$.
Thus $\widetilde{K} \crt(\fS^d) = K\crt(S^{-d-2} \R)$.
\end{proof}

\begin{thm} \label{KK}
The real $K$-homology $KKO_*(\fS^1, \R)$ satisfies
\[KKO_*(\fS^d, \R) = 
	\begin{cases} \Sigma^{-1} KO_*( \fS^1) & d = 1 \\
				KO_*(\R) \oplus \Sigma^{d+2} KO_{*}(\R) & d \geq 2 . 
	\end{cases} \]
\end{thm}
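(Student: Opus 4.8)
The plan is to feed the $\CRT$-module computation of Theorem~\ref{fS^d} into the Universal Coefficient Theorem for real $C^*$-algebras of \cite{Boer2004}. Its hypotheses hold here: $K\crt(\fS^d)$ is free and $\fS^d \otimes \C = C(S^d, \C)$ lies in the bootstrap category, so the UCT identifies $KKO_*(\fS^d, \R)$ with the real ($KO$-) part of the $\CRT$-module $\hom_{\CRT}\big(K\crt(\fS^d), K\crt(\R)\big)$, the $\ext$-term vanishing because $K\crt(\fS^d)$ is free.

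For $d \geq 2$, Theorem~\ref{fS^d} gives $K\crt(\fS^d) \cong K\crt(\R) \oplus \Sigma^{-d-2}K\crt(\R)$, so
\[ \hom_{\CRT}\big(K\crt(\fS^d), K\crt(\R)\big) \;\cong\; \hom_{\CRT}\big(K\crt(\R), K\crt(\R)\big)\, \oplus\, \Sigma^{d+2}\hom_{\CRT}\big(K\crt(\R), K\crt(\R)\big). \]
Since $K\crt(\R)$ is the monoidal unit for $\CRT$-modules, $\hom_{\CRT}(K\crt(\R), M) \cong M$ for every $M$; taking $M = K\crt(\R)$ and passing to real parts yields $KKO_*(\fS^d, \R) \cong KO_*(\R) \oplus \Sigma^{d+2}KO_*(\R)$. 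One can also package this geometrically: Theorem~\ref{fS^d} and the UCT show $\fS^d$ is $KK$-equivalent to $\R \oplus S^{-d-2}\R$, and since $S^{-d-2}\R$ is $KK$-inverse to $S^{d+2}\R$ one gets $KKO_*(\fS^d, \R) \cong KKO_*(\R, \R) \oplus KKO_*(\R, S^{d+2}\R) = KO_*(\R) \oplus \Sigma^{d+2}KO_*(\R)$.

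For $d = 1$ the same UCT still applies because $K\crt(\fS^1) = K\crt(T)$ is free, but now $T$ is the self-conjugate circle algebra, not a sum of copies of $\R$, so one must compute $\hom_{\CRT}(K\crt(T), K\crt(\R))$ directly from the structure of the free $\CRT$-module $K\crt(T)$ (its groups are listed in Table~\ref{crrRCT}); this should return $\Sigma^{-1}KO_*(\fS^1)$. Alternatively one can proceed topologically: apply $KKO(-,\R)$ to the $d=1$ case of the extension from the proof of Theorem~\ref{fS^d}, namely $0 \to S\C \to \fS^1 \to \C \to 0$, and identify the connecting homomorphisms.

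The argument for $d \geq 2$ is essentially formal once Theorem~\ref{fS^d} is in hand. I expect the main obstacle to be the $d = 1$ case: since $T$ is genuinely self-conjugate one cannot read off its real $K$-homology from a splitting, and one must carry out the honest $\CRT$-module $\hom$-computation — or, in the extension approach, pin down the connecting maps and the resulting group extensions, which will presumably require the natural transformations $c$, $r$, $\eta$ and the known complex $K$-homology of $S^1$.
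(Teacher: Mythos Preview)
Your argument for $d \geq 2$ is correct and essentially identical to the paper's: both invoke the UCT with vanishing $\ext$-term, split $K\crt(\fS^d)$ via Theorem~\ref{fS^d}, and use that $K\crt(\R)$ is free on a single generator in $KO_0$, so that $\hom_{\CRT}(K\crt(\R),K\crt(\R))_i = KO_i(\R)$.

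For $d = 1$ you have the right setup but leave the computation undone, and the two routes you propose (reading off $\hom_{\CRT}$ from the table of groups, or chasing the extension $0 \to S\C \to \fS^1 \to \C \to 0$) are both more work than needed. The missing ingredient, which is the paper's entire argument in this case, is that $K\crt(T)$ is a \emph{monogenic} free $\CRT$-module whose single free generator lives in the self-conjugate part, in degree~$-1$. A degree-$i$ $\CRT$-morphism out of such a module is determined freely by where this generator lands, giving immediately
\[
KKO_i(\fS^1,\R)\;\cong\;\hom_{\CRT}\big(K\crt(T),K\crt(\R)\big)_i\;=\;KT_{i-1}(\R)\;=\;KO_{i-1}(\fS^1),
\]
the last equality being the definition $KT_*(A)=KO_*(T\otimes A)$ with $A=\R$ and $T=\fS^1$. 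Merely listing the groups of $K\crt(T)$ as in Table~\ref{crrRCT} does not by itself determine $\hom_{\CRT}$; what matters is knowing in which part of the module the free generator sits, and here it is $KT$, not $KO$ or $KU$.
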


\begin{proof}
In all cases, since $K\crt(\fS^d)$ is a free $\mathcal{CRT}$-module, the Universal Coefficient Theorem (Theorem~1.1 of \cite{Boer2004}) implies that
$KKO_*( \fS^1, A) \cong \hom\scrt( K\crt(\fS^1), K\crt(A))_i$ for any real $C \sp *$-algebra $A$ 
(this designates the group of $\mathcal{CRT}$-module morphisms of degree $i$). 

Furthermore, $K\crt(\fS^1) = K\crt(T)$ has a single free generator, which lives in the self-conjugate part of $K\crt(\fS^1)$ in degree -1. Therefore
\begin{align*}
KKO_i( \fS^1, \R) &= \hom_{\scriptscriptstyle {\it CRT}}( K\crt(\fS^1), K\crt(\R))_i \\
	&= KT_{i-1}(\R) \\
	&= KO_{i-1}(\fS^1) \; .
\end{align*}


For $d \geq 2$, from Theorem~\ref{fS^d} we know $K\crt(\fS^d)$ has two free generators which both live the real part, in degrees 0 and $d+2$. Therefore,
\begin{align*}
KKO_i( \fS^d, \R) &= \hom_{\scriptscriptstyle {\it CRT}}( K\crt(\fS^d), K\crt(\R))_i   \\
	&=  \hom_{\scriptscriptstyle {\it CRT}}( K\crt(\R) \oplus \Sigma^{-d-2} K\crt(\R), K\crt(\R))_i   \\
	&= KO_i(\R) \oplus KO_{i+d+2}(\R)  \; .
\end{align*}
\end{proof}


\section{Unitary Generators in Low Dimensions} \label{section-lowdim}

In this section, we present unitary elements that generate each of the groups of $K\crr(\fS^d)$, for $1 \leq d \leq 4$, following the unitary picture of real $K$-theory summarized in Table~\ref{unitaryTable}. 
We do this explicitly for the low dimensional cases because the resulting formulas comprise likely candidates for topological markers of physical systems, with an involution. In the following section, we will present an algorithm building these formulas in arbitrary dimension.
For $d = 2,3,4$ we write $K\crr(\fS^d) \cong K\crr(\R) \oplus \widetilde{K}\crr(\fS^d)$ and it suffices to focus on generators of the groups in the second summand, since the first summand is associated with the constant functions on the sphere and the relevant generators can be determined by the inclusion $\R \hookrightarrow \fS^d$. We will take for granted the isomorphism classes of all the groups $KO_i(\fS^d)$ and $KU_i(\fS^d)$ from Table~\ref{lowdim} in the previous section.

\vspace{2cm}

\begin{prop}
Table \ref{fS1unitary} shows unitary representatives of generators for $KO_i(\fS^1)$.
\end{prop}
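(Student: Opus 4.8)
The plan is to produce explicit unitaries in matrix algebras over $\fS^1\sc = C([0,1],\C)$ (using the realization $\fS^1 \cong \{ f \in C([0,1],\C) \mid f(0) = \overline{f(1)}\}$, so that $\fS^1\sc = C([0,1],\C)$ with the involution $(f^\tau)(t) = \overline{f(1-t)}$) that satisfy the symmetry relations $\mathscr{S}_i$ from Table~\ref{unitaryTable}, and then to verify that each represents a generator of the corresponding group from Table~\ref{lowdim}. Since $KO_*(\fS^1) = (\Z, \Z_2, 0, \Z)$ in degrees $0,1,2,3$ (period 4), and $KU_*(\fS^1) = (\Z,\Z,\Z,\Z,\dots)$, there are really only a handful of groups to treat: $KO_0 = \Z$ (the class of the unit, i.e. the constant-function part, so a trivial representative suffices), $KO_1 = \Z_2$, $KO_3 = \Z$, and then the complex groups $KU_0 = \Z$, $KU_1 = \Z$, each with one generator. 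The degree-0 real and degree-0 complex generators come from constant functions, so the substantive content is $KO_1$, $KO_3$, and $KU_1$.

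First I would handle $KU_1(\fS^1) = K_1(C([0,1],\C)) $. Actually $C([0,1])$ is contractible, so $KU_1(\fS^1\sc)$ as an ungraded complex $K$-theory group would be trivial — but here $KU_*(\fS^1) = K_*(\C \otimes \fS^1) = K_*(C(S^1))$ since $\fS^1 \otimes \C \cong C(S^1,\C)$ (the complexification forgets the involution and glues the endpoints). So $KU_1$ is generated by the standard winding-number-one unitary on the circle, which, written on $[0,1]$, is $u(t) = e^{2\pi i t}$; one checks $u(0) = 1 = \overline{u(1)}$ automatically in the complexification picture, and this is the Bott generator. For $KO_3(\fS^1) = \Z$: using the second line of the $KO_3$ row, I want a unitary $u \in M_2(\fS^1\sc)$ with $u^{\sharp \otimes \mathrm{Tr} \otimes \tau} = u$ (equivalently, in the first-line picture, $u^{\sharp \otimes \mathrm{Tr} \otimes \tau} = u$ with $n_i = 2$), realized so that its image in $KU_3(\fS^1) = \Z$ (to which it maps via $c$) detects the generator; concretely I expect to take something like $u(t) = \mathrm{diag}(e^{2\pi i t}, e^{-2\pi i t})$ modified by a rotation so as to satisfy the quaternionic symmetry, i.e. conjugate structure matching $\sharp$. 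For $KO_1(\fS^1) = \Z_2$: I need a unitary $u \in \fS^1\sc = C([0,1],\C)$ with $u^{\mathrm{Tr}\otimes\tau} = u^*$, i.e. $\overline{u(1-t)} = \overline{u(t)}$... wait, $u^\tau(t) = \overline{u(1-t)}$ and we need $u^\tau = u^*$, so $\overline{u(1-t)} = \overline{u(t)}$, giving $u(1-t) = u(t)$; combined with the endpoint condition this forces $u(0) = u(1) = \overline{u(0)}$ real, so $u$ is a path of unit complex numbers symmetric about $t = 1/2$ with real endpoints — the nontrivial class should be $u(t) = e^{\pi i \sin(\pi t)}$ or more simply a half-loop $e^{2\pi i t}$ restricted appropriately; the key is that $[u]$ has order $2$, which I verify by comparing with the computation of $KO_1(\fS^1) = \Z_2$ and checking $u \oplus u$ is trivial (connect to a constant through symmetric unitaries) while $u$ itself is not (it maps nontrivially under an index-type invariant, e.g. the mod-2 winding/spectral-flow invariant, or by naturality under $\pi_* \colon \fS^1 \to$ a point composed with the known answer).

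To actually verify that each proposed unitary is a \emph{generator} and not merely a nonzero or nontrivial class, I would invoke naturality and the exact sequences from Section~2: the class of the unit generates the $K\crt(\R)$ summand by Theorem~\ref{fS^d} (or directly Corollary~1.6 of \cite{Boer2002}), and for the $\widetilde{K}$-part I would use the short exact sequence $0 \to S\C \to \fS^1 \to \C \to 0$ (evaluation at a fixed point) — more precisely, for $\fS^1$ one uses $\fS^1 \cong \{f : f(0) = \overline{f(1)}\}$ and the evaluation-at-$0$ map $\fS^1 \to \C$ with kernel $S\C$ — and track the generators through the boundary maps, which are isomorphisms onto the relevant groups by the computation in \cite{Boer2002}. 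The main obstacle I anticipate is bookkeeping: correctly translating each abstract symmetry relation $\mathscr{S}_i$ (in terms of $\mathrm{Tr}\otimes\tau$ or $\sharp\otimes\mathrm{Tr}\otimes\tau$) into an explicit functional equation on $[0,1]$ under the specific involution $f^\tau(t) = \overline{f(1-t)}$, writing down a unitary-valued path satisfying it, and then producing an honest homotopy (through symmetry-respecting unitaries, possibly after stabilization by $\mathrm{diag}(\cdot, I^{(i)})$) that pins down the class as a generator rather than a multiple. The $KO_1 = \Z_2$ torsion case is the most delicate, since I must show the class is \emph{nonzero} (no symmetric-unitary homotopy to the identity) while $2[u] = 0$; I expect to do this by exhibiting the mod-$2$ invariant explicitly or by reducing to the known result in \cite{Boer2002} via the identification of $u$ with a standard generator there.
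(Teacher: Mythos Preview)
There are two genuine gaps. First, your model of the complexification is wrong: in the half-interval model $\fS^1 \cong \{f\in C([0,1],\C)\mid f(0)=\overline{f(1)}\}$ the algebra $(\fS^1)\sc$ is \emph{not} $C([0,1],\C)$ (you notice this yourself when $KU_1$ comes out trivial). The paper instead works on the full circle, writing $(\fS^1)\sc = C(S^1,\C)$ as periodic functions on $[0,1]$ with the $\C$-\emph{linear} involution $f^\tau(t)=f(t\pm\tfrac12)$ --- no complex conjugate. Your conjugate-linear $f^\tau(t)=\overline{f(1-t)}$ has the wrong fixed algebra (it picks out the reflection involution $\tau^{1,1}$ on $S^1$, not the antipodal one), so every symmetry condition you derive from it, including your analysis of $KO_1$, is off. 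Second, the period-$4$ coincidence of the \emph{groups} does not cut the work down to degrees $0,1,3$: the eight symmetry relations $\mathscr{S}_i$ in Table~\ref{unitaryTable} are genuinely different, and Table~\ref{fS1unitary} asks for a separate unitary in each of the degrees $-1,0,1,\dots,6$. Your plan leaves $i=-1,4,5$ entirely unaddressed.

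The paper's route is also quite different in spirit and much shorter. For $i=0,1$ it uses no spectral-flow or half-loop construction at all: the unital inclusion $\R\hookrightarrow\fS^1$ is an isomorphism on $KO_0$ and $KO_1$, so the \emph{constant} $-1$ already generates $KO_1(\fS^1)=\Z_2$. For the remaining degrees the paper does not chase boundary maps in the $S\C\to\fS^1\to\C$ sequence; instead it uses the known behaviour of the complexification map (from the $\mathcal{CRT}$-module structure of $K\crt(T)$): $c_i$ is an isomorphism for $i=0,4$ and multiplication by $2$ for $i=-1,3$, and since $c_i[u]=[u]$ one simply checks that the proposed $x_i$ lands on (respectively, twice) the known complex generator $[y_1]=[e^{2\pi i t}]$. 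The $\Z_2$ in degree $5$ then comes from $\eta_4[x_4]=[x_4]$. If you want to salvage your outline, first fix the involution to $f^\tau(t)=f(t+\tfrac12)$ on periodic functions, then replace the ad hoc $KO_1$ construction by the constant $-1$, and verify the higher-degree generators via $c_i$ rather than exact-sequence chasing.
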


\begin{table} 
\caption{Unitaries for $K\crr(\fS^1)$} \label{fS1unitary}
$$\begin{array}{|c|c| c|c|}  \hline
  & \text{isomorphism class} & 
 	\multicolumn{2}{|c|}{ \text{unitary representing a generator}}  \\ \hline \hline
 KU_0( \fS^1) & \Z & y_0 = 1_2 &  1_2 \\ \hline
 KU_1(\fS^1) & \Z & y_1 = \exp(2 \pi i t)  &  z \\ \hline \hline
KO_0(\fS^1) & \Z & x_{0} = 1_2 & 1_2  \\ \hline 
KO_1(\fS^1) & \Z_2 & x_{1} = -1 & -1  \\ \hline
KO_2(\fS^1) & 0 &&  \\ \hline 
KO_3(\fS^1) & \Z & x_3 = \begin{cases} \diag(\exp({4 \pi i t}), 1) &t \in [0, 1/2] \\
				\diag(1, \exp({4 \pi i t}))  & t \in [1/2, 0] \end{cases} 
&  \begin{cases} \diag(z^2, 1) &z \in (S^1)^+\\
				\diag(1, z^2) &z \in (S^1)^- \end{cases}   \\ \hline
KO_4(\fS^1) & \Z & 
	x_4 = \diag \left( \left(  \begin{smallmatrix} \cos 2 \pi t & \sin 2 \pi t   \\ \sin 2 \pi t & -\cos 2 \pi t &   \end{smallmatrix} \right) , 1_2 \right) & 
	 \diag \left( \left(  \begin{smallmatrix} x & y   \\ y & -x &   \end{smallmatrix} \right) , 1_2 \right) \\ \hline  
KO_5(\fS^1) & \Z_2 & 
	x_5 = \diag \left( \left(  \begin{smallmatrix} \cos 2 \pi t & \sin 2 \pi t   \\ \sin 2 \pi t & -\cos 2 \pi t &   \end{smallmatrix} \right) , 1_2 \right) & 
	 \diag \left( \left(  \begin{smallmatrix} x & y   \\ y & -x &   \end{smallmatrix} \right) , 1_2 \right) \\ \hline  
KO_6(\fS^1) & 0 &&  \\ \hline 
KO_{7}(\fS^1) & \Z & x_{-1} = \exp(4 \pi i t) &  z^2   \\ \hline 
\end{array}$$

\end{table}

In this table, the third column represents $\fS^1$ using functions on $t \in [0,1]$ that satisfy $f(0) = f(1)$, with the involution 
$$f^\tau(t) = \begin{cases} f(t+0.5)  & t \in \left[ 0, \tfrac 12  \right]   \\ f(t-0.5)  & t \in \left[ \tfrac 12 , 1 \right] \; . \end{cases} $$
The third column represents $\fS^1$ equivalently using functions on $z = x + iy \in S^1 \subset \C$ with the involution 
$f^\tau(z)  = {f(-z)}$
In the line for $KO_3$, note that $(S^1)^+$ and $(S^1)^-$ are the top and bottom half of the circle $S^1 \subset \C$ respectively.

\begin{proof}
The complex groups $KU_i(\fS^1) = K_i(C(S^1)) = (\Z, \Z)$ are well known and the unitary generators $y_0$ and $y_1$ can be found in Section 4 of  \cite{Boer2020}. The abstract isomorphism classes of the groups $KO_i(\fS^1) = KO_i(T)$ are known from Table~3 of \cite{Boer2002}.
Also, from Table~3 of \cite{Boer2002} we know that $c_i$ is an isomorphism for $i = 0,4$ and is multiplication by 2 for $i = -1, 3$. These facts will be used below. 

The unital inclusion $\R \rightarrow \fS^1$ induces an isomorphism on $KO_i$ for $i = 0,1$ and using this map the classes of $x_0$, $x_1$ can be identified as shown in the table, using the known representatives of $KO_i(\R)$. 

Now the formula for $c$ from \cite{Boer2002} is $c_i[u] = [u]$ for all $i$. 
For $x_4$, first vertify that the given $x_4$ is a self-adjoint unitary satisfying $x^{\sharp \otimes \tau} = x$, so we have $[x_4] \in KO_4(\fS^1)$. 
Since $c_4 \colon KO_4(\fS^1) \rightarrow KU_4(\fS^1)$ is an isomorphism and we have $c_4[x_4] = [x_4]$, it suffices to show that $x_4$ represents a generator of $KU_4(\fS^1) = K_0(C(S^1, \C)) = \Z$. But now there is also an isomorphism $\pi_* \colon K_0(C(S^1, \C)) \rightarrow K_0(\C)$ given by point evaluation, so it suffices to observe that $[\pi(x_4)] = [\diag(1, -1, 1, 1)] = [\diag(1,1)]$ generates $K_0(\C) = \Z$.

The formula $\eta_4[u] = [u]$ from \cite{Boer2020} immediately implies that the given $x_5$ is a generator of $KO_5(\fS^1) = \Z_2$.

For $x_{-1}$ and $x_3$, use the fact that for $i = -1, 3$ the map  $c_{i} \colon KO_{i}(\fS^1) \rightarrow KU_{i}(\fS^1)$ is multiplication by 2 from $\Z \rightarrow \Z$. 
We verify that in both cases, the given element is a unitary $x_i$ that satisfies the symmetry to represent an element in $KO_i(\fS^1)$. In particular check that $x_{-1}^\tau = x_{-1}$ and that $(x_3)^{\sharp \otimes \tau} = x_3$. So we have
$c_{-1}([x_{-1}]) = [x_{-1}] \in KU_1(\fS^1)$; and as classes in $KU_1(\fS^1)$ we have $[x_{-1}] = [\diag(\exp(2 \pi it), \exp(2 \pi it) ) ] = 2[ y_1]$ through an obvious path of unitaries in $M_2(\fS^1 \otimes \C) = M_2(C(S^1))$. Therefore $[x_{-1}]$ must be a generator of $KO_{-1}(\fS^1) = \Z$. 
A very similar argument also shows $c_3[x_3] = 2[y_1]$ so that $[x_3]$ must be a generator of $KO_3(\fS^1) = \Z$.
\end{proof}

\vspace{.5cm}

\begin{prop}
Table \ref{fS2unitary} shows unitary representatives of generators for $\widetilde{K}\crr_i(\fS^2)$.
\end{prop}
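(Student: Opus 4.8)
The plan is to work through the six nontrivial reduced groups — $\widetilde{KU}_0(\fS^2)$, $\widetilde{KU}_2(\fS^2)$, $\widetilde{KO}_0(\fS^2)$, $\widetilde{KO}_4(\fS^2)$ (each $\cong\Z$) and $\widetilde{KO}_5(\fS^2)$, $\widetilde{KO}_6(\fS^2)$ (each $\cong\Z_2$) — one at a time; the other reduced groups vanish, so nothing is to be checked there. In each case there are two tasks. First, verify directly that the matrix listed in Table~\ref{fS2unitary} is a unitary over $\fS^2\otimes\C=C(S^2,\C)$ satisfying the symmetry relation $\mathscr{S}_i$ of Table~\ref{unitaryTable}; here one uses that on $C(S^2,\C)$ the involution $\tau$ acts by $g^\tau(x)=g(-x)$, so that, for instance, the $KO_0$-condition $u\Ttau=u$ reads $u(x)^{\mathrm{Tr}}=u(-x)$ and the $KO_4$-condition $u\sTtau=u$ says that $u(-x)$ is the block transpose-and-$\sharp$ of $u(x)$. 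Second, identify the resulting class with a generator of the appropriate group in Table~\ref{lowdim} by pushing it through the natural transformations $c,r,\eta,\beta$.

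The backbone of the second task is Theorem~\ref{fS^d}, which gives $\widetilde{K}\crr(\fS^2)\cong\Sigma^{-4}K\crr(\R)$ as $\mathcal{CR}$-modules; this isomorphism intertwines $c,r,\eta,\beta$, so the behaviour of these transformations on $\widetilde{K}\crr(\fS^2)$ is a degree-$(-4)$ shift of their behaviour on $K\crr(\R)$. From the relations listed in the introduction together with the exact sequence~(\ref{eq:CR-LES}) applied to $\R$, one reads off that on $K\crr(\R)$ the map $c\colon KO_0\to KU_0$ is an isomorphism, $c\colon KO_4\to KU_4$ is multiplication by $2$ (dually $r\colon KU_4\to KO_4$ is an isomorphism), $\eta\colon KO_0\to KO_1$ is onto, $\eta\colon KO_1\to KO_2$ is an isomorphism of $\Z_2$'s, and $\beta$ is an isomorphism. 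Transporting along the shift: the reduced maps $c\colon\widetilde{KO}_4(\fS^2)\to\widetilde{KU}_4(\fS^2)$ and $r\colon\widetilde{KU}_0(\fS^2)\to\widetilde{KO}_0(\fS^2)$ are isomorphisms, the reduced $c\colon\widetilde{KO}_0(\fS^2)\to\widetilde{KU}_0(\fS^2)$ is multiplication by $2$, the reduced $\eta$ is onto $\widetilde{KO}_5(\fS^2)$ and an isomorphism $\widetilde{KO}_5(\fS^2)\to\widetilde{KO}_6(\fS^2)$, and $\beta\colon\widetilde{KU}_0(\fS^2)\to\widetilde{KU}_2(\fS^2)$ is an isomorphism. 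The only outside input beyond this is that the self-adjoint unitary $Q_2(x)=\sum_{i=1}^3 x_i\Gamma_{i,3}$ represents a generator of $\widetilde{KU}_0(\fS^2)=\widetilde K_0(C(S^2))=\Z$, which is Proposition~1 of \cite{SBS} with $d=3$ (or follows by computing that the first Chern number of its $(+1)$-eigenbundle is $\pm1$).

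With these in hand the identifications are essentially bookkeeping. The listed $\widetilde{KU}_0$ generator is $Q_2$ (up to the choice of Clifford generators), and applying $\beta$ gives the $\widetilde{KU}_2$ generator. For $\widetilde{KO}_4(\fS^2)$, once the $KO_4$-symmetry of the listed $u_4$ is checked, it suffices — since the reduced $c_4$ is an isomorphism — to see that $c[u_4]=[u_4]$, regarded as a self-adjoint unitary in $M_4(C(S^2,\C))$, has $(+1)$-eigenbundle of first Chern number $\pm1$, i.e. generates $\widetilde{KU}_4(\fS^2)\cong\widetilde{KU}_0(\fS^2)$; this is done by exhibiting a unitary equivalence over $C(S^2,\C)$ (ignoring the symmetry) between $u_4$ and $\diag(Q_2,\pm1,\dots)$, which is the sense, discussed in the introduction, in which the symmetrized Clifford generators agree with the ordinary ones in complex $K$-theory. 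For $\widetilde{KO}_0(\fS^2)$ the reduced $c_0$ is only multiplication by $2$, so instead one uses that the reduced $r_0$ is an isomorphism: the listed self-conjugate self-adjoint $u_0$ is $r[Q_2]$, checked against the explicit formula for $r$ in the unitary picture from \cite{BL}, \cite{Boer2020} (equivalently, one checks that the $(+1)$-eigenbundle of $u_0$ has first Chern number $\pm2$, which by the multiplication-by-$2$ behaviour of $c_0$ again forces $[u_0]$ to generate). Finally, for $\widetilde{KO}_5(\fS^2)$ and $\widetilde{KO}_6(\fS^2)$ one verifies the symmetries and then uses the concrete description of $\eta$ in the unitary picture — which, as in the passage from $x_4$ to $x_5$ in Table~\ref{fS1unitary}, leaves the underlying matrix essentially unchanged — to recognise the listed $u_5,u_6$ as $\eta[u_4]$ and $\eta^2[u_4]$; since the reduced $\eta$ is onto $\widetilde{KO}_5(\fS^2)$ and an isomorphism onto $\widetilde{KO}_6(\fS^2)$, these are generators.

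The \emph{main obstacle} is the analytic step for the $\Z$-valued groups: pinning down the first Chern number of the relevant $(+1)$-eigenbundle — equivalently producing the explicit unitary equivalence over $C(S^2,\C)$ to a standard Bott-type self-adjoint unitary — and correctly tracking the factor of $2$ that makes the reduced $c$ an isomorphism in degree $4$ but multiplication by $2$ in degree $0$, so that $\widetilde{KO}_4(\fS^2)$ must be pinned down through $c$ while $\widetilde{KO}_0(\fS^2)$ must be pinned down through $r$. The unitarity and symmetry verifications, and the $\Z_2$ cases via $\eta$, are routine once the symmetry conventions are set up.
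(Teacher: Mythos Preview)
Your approach is essentially the paper's: verify the symmetry relations directly, anchor everything to the Bott class $[y_0]=[Q_2]\in\widetilde{KU}_0(\fS^2)$ from \cite{SBS}, and then push through the natural transformations $c,r,\eta$ using the $\mathcal{CR}$-module identification $\widetilde{K}\crr(\fS^2)\cong\Sigma^{-4}K\crr(\R)$. In particular your treatment of $x_4$ via the isomorphism $c_4$, of $x_0$ via the isomorphism $r_0$, and of $x_5$ via the surjection $\eta_4$ coincides with the paper's.

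Two small points of divergence are worth noting. First, your verification that $c_4[x_4]$ generates $\widetilde{KU}_0(\fS^2)$ is overcomplicated: the listed $x_4$ is literally the $2\times 2$ matrix $y_0$, so $c_4[x_4]=[y_0]$ on the nose and no auxiliary unitary equivalence to $\diag(Q_2,\pm1,\dots)$ is needed. Second, and more substantively, for $x_6$ the paper does \emph{not} use $\eta^2$; it uses the surjection $r_6\colon\widetilde{KU}_6(\fS^2)\to\widetilde{KO}_6(\fS^2)$ together with the explicit formula $r_6[u]=\bigl[\begin{smallmatrix}a&ib\\-ib&a\end{smallmatrix}\bigr]$ from \cite{Boer2020}, which, because $y_0^{\sharp\otimes\tau}=y_0$, yields $a=0$, $b=y_0$ and hence the listed $4\times4$ block matrix directly. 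Your route via $\eta_5\circ\eta_4$ is certainly valid for producing \emph{a} generator (since $\eta\colon\widetilde{KO}_5(\fS^2)\to\widetilde{KO}_6(\fS^2)$ corresponds to the isomorphism $\eta\colon KO_1(\R)\to KO_2(\R)$), but you would then still have to check that the $\eta_5$-formula from \cite{Boer2020} applied to $x_5$ actually produces the specific $x_6$ in the table, which is not the ``matrix essentially unchanged'' situation of $\eta_4$. The paper's $r_6$ computation sidesteps this by constructing the table entry rather than recognising it.
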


\begin{table} 
\caption{Unitaries for $K\crr(\fS^2)$} \label{fS2unitary}
$$\begin{array}{|c|c|c|}  \hline
  & \text{isomorphism class} & { \text{unitary representing a generator}}  \\ \hline \hline
\TT \BB \widetilde{KU}_0( \fS^2)  & \Z &  y_0 = \sm{x}{y + iz}{y - iz}{-x}  \\ \hline
\TT \BB \widetilde{KU}_1(\fS^2) & 0 &     \\ \hline \hline
\TT \BB \widetilde{KO}_{0}(\fS^2)  & \Z & 
	x_0 =  \left(  \begin{smallmatrix}  0 & iz & ix & iy \\ -iz &  0 & iy & -ix \\ -ix & -iy & 0 & iz \\ -iy & ix & -iz & 0 
						\end{smallmatrix} \right) 
					\\ \hline 
\TT \BB \widetilde{KO}_1(\fS^2)  & 0 &   \\ \hline 
\TT \BB \widetilde{KO}_2(\fS^2)  & 0 &    \\ \hline
\TT \BB\widetilde{KO}_3(\fS^2)  & 0 &  \\ \hline 
\TT \BB \widetilde{KO}_4(\fS^2)  & \Z & x_4 = 
		\left( \begin{smallmatrix} x & y + iz   \\ y - iz & -x  \end{smallmatrix} \right) \\  \hline
\TT \BB \widetilde{KO}_5(\fS^2) & \Z_2 & x_5 =
	\left( \begin{smallmatrix} x & y + iz   \\ y - iz & -x  \end{smallmatrix} \right) \\  \hline
\TT \BB \widetilde{KO}_6(\fS^2) & \Z_2 & 
	x_6 =  \left( \begin{smallmatrix} 0 & 0 & ix & -z + iy \\ 0 & 0 & z + iy & -ix
			\\ -ix & z - iy & 0 & 0 \\  -z - iy & ix & 0 & 0
				\end{smallmatrix} \right)	 \\ \hline 
\TT \BB \widetilde{KO}_7(\fS^2)   & 0 &  \\ \hline 
\end{array}$$
\end{table}

In this table, the elements of $\fS^2$ are written as functions on $(x,y,z) \in S^2 \subset \R^3$ with the involution $f^\tau(x,y,z)  = {f(-x,-y,-z)}$.
Note that as in the previous proof, we will take for granted the isomorphism class of the groups shown (from Table~3) as well as the action of the natural transformations such as $r_i$, $c_i$, and $\eta_i$ on those groups. The behavior of these natural transformations can be deduced just from the long exact sequence relating real and complex $K$-theory. It can also be found in Table~1 of \cite{Boer2002}. We will also make use of the various formulas for these natural transformations in terms of unitaries, which are established in \cite{Boer2020}.

\begin{proof}
The element $y_0 \in \widetilde{KU}_0(\fS^2)$ is the well-known Bott unitary for the sphere, see Example~7.1.3 of \cite{BL} or Proposition~1 of \cite{SBS}.

We can check directly that $x_4^{\sharp \otimes \tau} = x_4 = x_4^*$, so $[x_4]$ is a legitimate element of 
$\widetilde{KO}_4(\fS^2)$. Furthermore, in this context we know that $c_4$ is an isomorphism, so the fact that $c_4[x_4] = [y_4]$ verifies that $[x_4]$ is a generator of $\widetilde{KO}_4(\fS^2) = \Z$. We also know that $\eta_4 \colon \Z \rightarrow \Z_2$ is surjective and we have the formula $\eta_4[u] = [u]$ from Theorem~5.1 of \cite{Boer2020}. Therefore $x_5$ must be as given.

We know that $r_0 \colon \widetilde{KU}_0( \fS^2) \rightarrow \widetilde{KO}_0( \fS^2)$ is an isomorphism, and from Theorem~3.2 of \cite{Boer2002}, we know that a formula for $r_0$ is given by
$$r_0[u] = \left[ \left( \begin{smallmatrix} a & ib \\ -ib & a  \end{smallmatrix} \right) \right] \quad 
	\text{where $a = \tfrac{1}{2} (u + u^\tau)$ and $b = \tfrac{1}{2} (u - u^\tau)$} \; .$$
Using this we calculate $r_0[y_0]$ and obtain the given formula for $x_0$.

Finally, to get the formula for $x_6$ we use the fact that $r_6$ is surjective in this case and the formula for $r_6[y_0]$ from \cite{Boer2020}. That formula is
$$r_6[u] = \left[ \left( \begin{smallmatrix} a & ib \\ -ib & a  \end{smallmatrix} \right) \right] \quad 
	\text{where $a = \tfrac{1}{2} (u - u^{\sharp \otimes \tau})$ and $b = \tfrac{1}{2} (u + u^{\sharp \otimes \tau})$}  \;.$$
Noting that $y_0$ satisfies $y_0^{\sharp \otimes \tau} = y_4$, we obtain $a = 0$ and $b = x_4$.
\end{proof}

\begin{prop}
Table \ref{fS3unitary} shows unitary representatives of generators for $\widetilde{K}\crr_i(\fS^3)$.
\end{prop}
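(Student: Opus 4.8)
The plan is to build Table~\ref{fS3unitary} outward from the complex Bott generator of $S^3$, transporting it by the natural transformations $c$, $\eta$, and $r$, and checking each resulting class against the $\mathcal{CR}$-module structure $\widetilde{K}\crr(\fS^3) \cong \Sigma^{-d-2}K\crr(\R)$ supplied by Theorem~\ref{fS^d}. By that theorem (see also Table~\ref{lowdim}) the only nonzero reduced groups in play are $\widetilde{KU}_i(\fS^3) = \Z$ for $i$ odd, $\widetilde{KO}_1(\fS^3) \cong \widetilde{KO}_5(\fS^3) \cong \Z$, and $\widetilde{KO}_6(\fS^3) \cong \widetilde{KO}_7(\fS^3) \cong \Z_2$, so every other entry is blank. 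Writing elements of $\fS^3$ as functions on $x = (x_1,x_2,x_3,x_4) \in S^3 \subset \R^4$ with $f^\tau(x) = f(-x)$, the starting object is the standard Bott unitary $U_3(x) = x_1\Gamma_{1,3} + x_2\Gamma_{2,3} + x_3\Gamma_{3,3} + i x_4 I_2 \in M_2(C(S^3,\C))$ built from the $2 \times 2$ Clifford (Pauli) matrices $\Gamma_{j,3}$; by Proposition~1 of \cite{SBS} and the discussion in \cite{BL}, $[U_3]$ generates $\widetilde{KU}_1(\fS^3) = K_1(C(S^3)) = \Z$, and the remaining odd complex groups are recorded as its Bott translates.

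Next I claim $U_3$ itself represents the key generator $g_3 \in \widetilde{KO}_5(\fS^3)$. Since $\Gamma_{j,3}^\sharp = -\Gamma_{j,3}$, $I_2^\sharp = I_2$, and $\Gamma_{j,3}^* = \Gamma_{j,3}$, a one-line computation gives $U_3^{\sharp \otimes \tau}(x) = \bigl(U_3(-x)\bigr)^\sharp = x_1\Gamma_{1,3} + x_2\Gamma_{2,3} + x_3\Gamma_{3,3} - i x_4 I_2 = U_3(x)^*$, which is exactly the symmetry $\mathscr{S}_5$ of Table~\ref{unitaryTable}; hence $[U_3] \in \widetilde{KO}_5(\fS^3)$. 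From the reduced form of the long exact sequence (\ref{eq:CR-LES}) together with the vanishing $\widetilde{KO}_3(\fS^3) = \widetilde{KO}_4(\fS^3) = 0$ (read off from Theorem~\ref{fS^d}), the complexification $c_5 \colon \widetilde{KO}_5(\fS^3) \to \widetilde{KU}_5(\fS^3)$ is an isomorphism, and $c_5[U_3] = [U_3]$, which under the period-two identification $\widetilde{KU}_5(\fS^3) \cong \widetilde{KU}_1(\fS^3) = K_1(C(S^3))$ is the generator found in the previous paragraph. Therefore $[U_3]$ generates $\widetilde{KO}_5(\fS^3) = \Z$.

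The two torsion groups $\widetilde{KO}_6(\fS^3)$ and $\widetilde{KO}_7(\fS^3)$ are then obtained by applying $\eta$. Because $\widetilde{K}\crr(\fS^3)$ is a free (degree-shifted, monogenic) $KO_*(\R)$-module on $g_3$ and $\eta$ is surjective both on $KO_0(\R) \to KO_1(\R)$ and on $KO_1(\R) \to KO_2(\R)$, the transformations $\eta \colon \widetilde{KO}_5 \to \widetilde{KO}_6$ and $\eta \colon \widetilde{KO}_6 \to \widetilde{KO}_7$ are surjective (equivalently, this follows from (\ref{eq:CR-LES}) using $\widetilde{KU}_6(\fS^3) = 0$, and $\widetilde{KU}_7(\fS^3) = \Z$ with $c_7 = 0$). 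Hence $\eta[U_3]$ generates $\widetilde{KO}_6(\fS^3) = \Z_2$ and $\eta^2[U_3]$ generates $\widetilde{KO}_7(\fS^3) = \Z_2$, and the explicit matrices in the table are produced by applying the unitary-picture formulas for $\eta_5$ and $\eta_6$ from \cite{Boer2020} to $U_3$, converting the output into the relevant pictures of Table~\ref{unitaryTable} (for $KO_7 = KO_{-1}$ one may use whichever of the two listed variants is more convenient).

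Finally, for $\widetilde{KO}_1(\fS^3) = \Z$: from (\ref{eq:CR-LES}) with $\widetilde{KO}_0(\fS^3) = 0$, the map $c_1 \colon \widetilde{KO}_1(\fS^3) \to \widetilde{KU}_1(\fS^3)$ is injective with cokernel $\widetilde{KO}_{-1}(\fS^3) = \widetilde{KO}_7(\fS^3) = \Z_2$, so its image has index two; combining this with $r_1 c_1 = 2$ forces the realification $r_1 \colon \widetilde{KU}_1(\fS^3) \to \widetilde{KO}_1(\fS^3)$ to be an isomorphism — equivalently, $\widetilde{KO}_1(\fS^3)$ is generated by $\xi\, g_3 = r\beta^2 c[U_3]$. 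The table entry is thus $r_1[U_3]$, computed from the unitary-picture formula for $r_1$ in \cite{Boer2020} and, if desired, simplified through a homotopy of symmetric unitaries; one checks directly that it satisfies $\mathscr{S}_1$ (i.e. $v(-x) = \overline{v(x)}$) and, as a consistency check, that $c_1[v] = \pm 2[U_3]$. Conceptually nothing here is in doubt, since every class is forced by Theorem~\ref{fS^d} and the exact sequence (\ref{eq:CR-LES}); the main obstacle is purely computational, namely evaluating the $\eta_i$- and $r_1$-formulas of \cite{Boer2020} on $U_3$ and massaging the results into clean matrix representatives carrying the correct symmetry.
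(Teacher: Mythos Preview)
Your argument is correct and follows essentially the same route as the paper: both start from the Bott unitary $y_1 = U_3$ generating $\widetilde{KU}_1(\fS^3)$ via \cite{SBS}, verify the $\mathscr{S}_5$ symmetry directly and use that $c_5$ is an isomorphism to identify $x_5$, obtain $x_6$ from $\eta_5$, and obtain $x_1$ from $r_1$. The one genuine difference is your treatment of $x_7$: you reach it as $\eta^2[x_5]$ via the formula for $\eta_6$, whereas the paper instead applies $r_7$ to $y_1$ (noting abstractly that $r_7$ sends a generator to a generator). Both routes are valid and land in the same $\Z_2$, though they may produce different-looking matrix representatives; your $\eta$-route has the mild advantage that surjectivity of $\eta_6$ is immediate from the module structure, while the paper's $r_7$-route keeps the matrix size down and ties more directly to the complex generator.
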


\begin{table} 
\caption{Unitaries for $K\crr(\fS^3)$} \label{fS3unitary}
$$\begin{array}{|c|c|c|}  \hline
  & \text{isomorphism class} & { \text{unitary representing a generator}}  \\ \hline \hline
 \TT \BB \widetilde{KU}_0( \fS^3) & 0 &  \\ \hline
 \TT \BB \widetilde{KU}_1(\fS^3) & \Z & y_1 = \sm{x + iw}{y + iz}{y - iz}{-x + iw}     \\ \hline \hline
 \TT \BB \widetilde{KO}_{0}(\fS^3) & 0 & 			\\ \hline 
 \TT \BB\widetilde{KO}_1(\fS^3) &  \Z &   
	x_{1} =  i \left( \begin{smallmatrix} w & z & x & y \\ -z & w & y & -x \\ -x & -y & w & z \\ -y & x & -z & w \end{smallmatrix} \right)
	\\ \hline 
 \TT \BB\widetilde{KO}_2(\fS^3) & 0 &       \\ \hline
 \TT \BB\widetilde{KO}_3(\fS^3) & 0 &  \\ \hline 
 \TT \BB\widetilde{KO}_4(\fS^3) & 0 &  \\ \hline
 \TT \BB\widetilde{KO}_5(\fS^3) & \Z & 
		x_5= \sm{x + iw}{y + iz}{y - iz}{-x + iw}   \\ \hline 
 \TT \BB\widetilde{KO}_6(\fS^3) & \Z_2 & 
	x_6= 	  \left( \begin{smallmatrix} 0 & 0 & -w + ix & -z + iy \\ 0 & 0 & z + iy & -w - ix \\ w + ix & -z + iy & 0 & 0 \\ z + iy & w - ix & 0 & 0 \end{smallmatrix} \right)
			\\ \hline 
 \TT \BB\widetilde{KO}_7(\fS^3) & \Z_2 &  
	x_7 = \left( \begin{smallmatrix} 0 &  iz & -w+ix & iy \\ -iz & 0 & iy & -w-ix \\ w-ix & -iy & 0 & iz \\ -iy &w + ix & -iz & 0 \end{smallmatrix} \right)
			\\ \hline 
\end{array}$$
\end{table}

\begin{proof}
Proposition~1 of \cite{SBS} shows that $y_1$ is a unitary representing a non-trivial complex $K$-theory class generating $K_1(C(S^3)) = \Z$. 

For $i = 5$, first check that the given $x_5$ satisfies the correct relation $(x_5)^{\sharp \otimes \tau} = x_5^*$ and thus it represents an element of ${KO}_5(\fS^3)$. We furthermore know that $c_5$ must carry a generator of $\widetilde{KO}_5(\fS^3)$ to a generator of $\widetilde{KU}_5(\fS^3) = \Z$ (this can be deduced from the long exact sequence relating real and complex $K$-theory).
We see directly that $c[x_5] = [y_1]$ so it follows that $[x_5]$ is a generator of $\widetilde{KO}_5(\fS^3) = \Z$.

For $i = 1,7$ it is known that $r_i$ carries a generator of $\widetilde{KU}_i(\fS^3)$ to a generator of $\widetilde{KO}_i(\fS^3)$, that is $r_i[y_i] = [x_i]$. So we can identify representatives $x_1$ and $x_7$ of $\widetilde{KO}_i(\fS^3)$ using the formulas for $r_1$ and $r_7$ from Theorem~3.2 of \cite{Boer2020}.

For $x_6$, we know that $\eta_5$ carries a generator of $\widetilde{KO}_5(\fS^3)$ to a generator of $\widetilde{KO}_6(\fS^3)$. Then we use the formula for $\eta_5$ from \cite{Boer2020} and the fact that $\eta_5[x_5] = [x_6]$ to obtain the given formula for $x_6$.
\end{proof}

\begin{prop}
Table \ref{fS4unitary} shows unitary representatives of generators for $\widetilde{K}\crr_i(\fS^4)$.
\end{prop}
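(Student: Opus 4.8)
The plan is to exhibit a unitary generator for each of the five non-trivial groups of $\widetilde{K}\crr(\fS^4)$, which by Theorem~\ref{fS^d} and Table~\ref{lowdim} are $\widetilde{KU}_0(\fS^4) \cong \Z$, $\widetilde{KO}_0(\fS^4) \cong \Z_2$, $\widetilde{KO}_2(\fS^4) \cong \Z$, $\widetilde{KO}_6(\fS^4) \cong \Z$ and $\widetilde{KO}_7(\fS^4) \cong \Z_2$ (every other reduced group, including $\widetilde{KU}_1(\fS^4)$, vanishes, so the corresponding rows of Table~\ref{fS4unitary} are empty). As in the preceding three propositions, all but one of these can be built from a single complex generator together with the natural transformations $r$, $c$ and $\eta$; the exception is the class in $\widetilde{KO}_6(\fS^4)$.

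I would first fix $y_0 \in \widetilde{KU}_0(\fS^4)$ to be the $S^4$ Bott unitary $Q_4(x) = \sum_{i=1}^{5} x_i\, \Gamma_{i,5} \in M_4(\fS^4 \otimes \C)$, where $\Gamma_{1,5}, \dots, \Gamma_{5,5}$ is a system of $4 \times 4$ Clifford generators; this is a self-adjoint unitary representing a generator of $\widetilde{KU}_0(\fS^4) \cong \widetilde{K}^0(S^4) \cong \Z$ by \cite{SBS} and \cite{BL}, and the Bott translates $\beta^k[y_0]$ generate the groups $\widetilde{KU}_{2k}(\fS^4)$. Using the long exact sequence (\ref{eq:CR-LES}) and the identification $\widetilde{K}\crt(\fS^4) \cong \Sigma^{-6} K\crt(\R)$ from Theorem~\ref{fS^d}, one checks — exactly as in the $\fS^2$ and $\fS^3$ arguments — that $r_0 \colon \widetilde{KU}_0(\fS^4) \to \widetilde{KO}_0(\fS^4)$ is surjective and that $r_2 \colon \widetilde{KU}_2(\fS^4) \to \widetilde{KO}_2(\fS^4)$ is an isomorphism. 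So $x_0 := r_0[y_0]$ generates $\widetilde{KO}_0(\fS^4) = \Z_2$ and $x_2 := r_2[\beta y_0]$ generates $\widetilde{KO}_2(\fS^4) = \Z$, and the matrices listed in Table~\ref{fS4unitary} are obtained by substituting $y_0$ (respectively its Bott translate) into the explicit formulas for $r_0$ and $r_2$ from \cite{Boer2002} and \cite{Boer2020}.

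The essential point is the key generator $x_6 = g_4$ of $\widetilde{KO}_6(\fS^4) \cong \Z$, which cannot be reached from $y_0$ by a natural transformation: the long exact sequence shows $c_6 \colon \widetilde{KO}_6(\fS^4) \to \widetilde{KU}_6(\fS^4)$ is an isomorphism between copies of $\Z$, so $r_6 c_6 = 2$ forces the image of $r_6$ to be only $2\,\widetilde{KO}_6(\fS^4)$. I would instead write $x_6$ directly as a self-adjoint unitary $x_6 = \sum_{i=1}^{5} \Up_{i,5}\, x_i$, where $\Up_{1,5}, \dots, \Up_{5,5}$ is a Clifford system obtained from $\Gamma_{1,5}, \dots, \Gamma_{5,5}$ by arranging in addition that each $\Up_{i,5}$ is fixed by $\sharp \otimes \mathrm{Tr}$ — this is the $d = 4$ case of the recipe of Section~\ref{Section:S^n unitaries}, and such a system is exhibited explicitly. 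Since the coordinate functions satisfy $x_i^{\tau} = -x_i$ on $(S^d, \fa)$, the symmetry of the $\Up_{i,5}$ yields $x_6 = x_6^*$ and $x_6^{\sharp \otimes \tau} = -x_6$, so $[x_6] \in \widetilde{KO}_6(\fS^4)$; and because $c_6$ is an isomorphism it is enough to check that the underlying complex class of $x_6$ generates $\widetilde{KU}_6(\fS^4)$, which holds because $\Up_{1,5}, \dots, \Up_{5,5}$ is unitarily equivalent, as an undecorated Clifford system, to $\Gamma_{1,5}, \dots, \Gamma_{5,5}$, so that complex class coincides (after the Bott identification) with that of the $S^4$ Bott unitary. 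Finally $x_7 := \eta_6[x_6]$ generates $\widetilde{KO}_7(\fS^4) = \Z_2$, since $\eta_6 \colon \widetilde{KO}_6(\fS^4) \to \widetilde{KO}_7(\fS^4)$ is surjective by the long exact sequence; its matrix comes from the $\eta$-formula of \cite{Boer2020}, and one can cross-check $\eta^2[x_6] = x_0$ and $\xi[x_6] = x_2$ for consistency.

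The main obstacle I expect is the explicit construction of the symmetrized Clifford system $\Up_{1,5}, \dots, \Up_{5,5}$ and hence of $x_6$ — producing, in the smallest matrix size, matrices that are simultaneously self-adjoint, anticommuting with squares $1$, and $\sharp \otimes \mathrm{Tr}$-fixed — together with the bookkeeping that confirms the resulting complex $K$-theory class is a genuine generator rather than a nontrivial multiple of one. Everything else reduces to routine applications of the transformation formulas of \cite{Boer2002} and \cite{Boer2020} and of the long exact sequence (\ref{eq:CR-LES}).
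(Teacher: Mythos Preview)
Your proposal is correct and follows essentially the same route as the paper: build the key class $x_6$ from a Clifford system $\Up_{1,5},\dots,\Up_{5,5}$ satisfying $\Up_i^{\sharp\otimes\mathrm{Tr}}=\Up_i$, verify it generates $\widetilde{KO}_6(\fS^4)$ via the isomorphism $c_6$, then obtain $x_7$ from $\eta_6$ and $x_0,x_2$ from $r_0,r_2$ applied to the complex Bott class. The only presentational difference is that the paper defines $y_0$ itself using the symmetrized generators $\Up_i$ rather than the standard $\Gamma_i$, so that $y_0$ and $x_6$ are literally the same matrix and the equality $c_6[x_6]=[y_0]$ is immediate; your version keeps them separate and appeals to unitary equivalence of Clifford systems, which is a harmless extra step. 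Your explicit observation that $r_6$ only hits $2\widetilde{KO}_6(\fS^4)$, and hence cannot produce the generator, is a nice piece of motivation that the paper leaves implicit.
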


\begin{table} 
\caption{Unitaries for $K\crr(\fS^4)$} \label{fS4unitary}

$$\begin{array}{|c|c|c|}  \hline
  & \text{isomorphism class} & { \text{unitary representing a generator}}  \\ \hline \hline
 \TT \BB \widetilde{KU}_0( \fS^4) & \Z & 
 	y_0 = \left( \begin{smallmatrix} 
			v & 0 & x + iy & {w + iz} \\
			0 & v & {-w + iz} & {x - iy}  \\
			x - iy & -w - iz & -v & 0 \\
			{w - iz}& {x + iy }  & 0 & -v
			  \end{smallmatrix} \right)  \\ \hline			  
\TT \BB  \widetilde{KU}(\fS^4) & 0 &     \\ \hline \hline
\TT \BB \widetilde{KO}_{0}(\fS^4) & \Z_2 &  x_0 = 
	\left( \begin{smallmatrix}  a & ib \\ -ib & a \end{smallmatrix} \right)  \\ \hline	
\TT \BB \widetilde{KO}_1(\fS^4) & 0 &  \\ \hline 
\TT \BB \widetilde{KO}_2(\fS^4) &  \Z &   x_2 = 			
	\left( \begin{smallmatrix} b & ia \\ -ia & b \end{smallmatrix} \right)  \\ \hline
\TT \BB \widetilde{KO}_3(\fS^4) & 0 &  \\ \hline 
\TT \BB \widetilde{KO}_4(\fS^4) &  &  \\ \hline
\TT \BB \widetilde{KO}_5(\fS^4) & 0 &  \\ \hline 
\TT \BB \widetilde{KO}_6(\fS^4) & \Z & x_6 =  \left( \begin{smallmatrix} 
			v & 0 & x + iy & {w + iz} \\
			0 & v & {-w + iz} & {x - iy}  \\
			x - iy & -w - iz & -v & 0 \\
			{w - iz}& {x + iy }  & 0 & -v
			  \end{smallmatrix} \right)  \\ \hline	
\TT \BB \widetilde{KO}_7(\fS^4) & \Z_2 & x_7 =  \left( \begin{smallmatrix} 
			0 & -iv & -w + iz & -y-ix \\
			iv & 0 & -y+ix & w + iz \\
			w-iz & y - ix & 0 & iv \\
			y+ ix & -w - iz & -iv & 0  \\
			  \end{smallmatrix} \right)  \\ \hline	
\end{array}$$
\[ \text{where } a = i
	\left(  \begin{smallmatrix} 0 & 0 & y & z \\ 0 & 0 & z & -y \\ -y & -z & 0 & 0 \\ -z & y & 0 & 0 \end{smallmatrix} \right) 
		 \text{and }  
	b = \left( \begin{smallmatrix}  v & 0 & x & w \\ 0 & v & -w & x \\ x & -w & -v & 0 \\ w & x & 0 & -v \end{smallmatrix} \right) \]
\end{table}

\begin{proof}
Let 
$$\sigma_1 = \begin{pmatrix} i & 0 \\ 0 & -i \end{pmatrix}
\quad \sigma_2 = \begin{pmatrix} 0 & 1 \\ -1 & 0 \end{pmatrix}
\quad \sigma_3 = \begin{pmatrix} 0 & i \\ i & 0 \end{pmatrix}
$$
and
$$
\Up_i = \begin{cases} \begin{pmatrix} 0 & \sigma_i \\ \sigma_i^* & 0 \end{pmatrix} & \text{for $i = 1,2,3$} \\ ~ \\
 \begin{pmatrix} 0 & 1_2 \\ 1_2 & 0 \end{pmatrix} & i = 4 \\ ~ \\
 \begin{pmatrix}  1_2 & 0  \\ 0 & -1_2  \end{pmatrix} & i = 5 \; .
\end{cases}$$
Then we check that $\Up_i^* = \Up_i$ and $\Up_i \Up_j + \Up_j \Up_i = 2 \delta_{i,j}$ for all $i,j$. Therefore, by Proposition~1 of \cite{SBS}, it follows that
$$y_0 = x \Up_1 + y \Up_2 + z \Up_3 + w \Up_4 + v \Up_5$$
represents a generator of $\widetilde{KU}_0(S^4)$. Note that this is a different set of Clifford elements $\Up_i \in M_4(\C)$ to that described in \cite{SBS}. The representatives here are chosen to satisfy $\Up_i^{\sharp \otimes {\rm{Tr}}} = \Up_i$ for all $i$. 
 Any representation of $M_4(\C)$ is unitarily equivalent to any other, so the resulting class $[y_0]$ will be the same up to sign.
 (We will see in Section~\ref{Section:S^n unitaries} how to construct the appropriate set of Clifford generators in general. The matrices $\Up_i$ that appear here can be obtained from that construction.)

Since $\Up_i^{\sharp \otimes {\rm{Tr}}} = \Up_i$ for all $i$, it follows that
$x_6$ satisfies $x_6^{\sharp \otimes \tau} = - x_6$, so $[x_6]$ is a legitimate element of $\widetilde{KO}_6(\fS^4)$. Then the fact that $c_6[x_6] = [y_0]$ confirms that $[x_6]$ is a generator of $\widetilde{KO}_6(\fS^4) = \Z$.

To find $x_7$, use the formula for $\eta_6$ from \cite{Boer2020}. (Note that there is a mistake in the formula for $\eta_2$ and $\eta_6$ in \cite{Boer2020}. Those formulas should involve the matrices $X_{1,n}$ instead of $X_{2,n}$.)

For $x_0$ and $x_2$, we know that $r_i$ is surjective for $i = 0,2$. Hence, applying the appropriate formulas for $r_0$ and $r_2$ (from \cite{Boer2020}) to $y_0$ 
results in the given formulas for $x_0$ and $x_2$. (We note that the formulas for $r_0$ and $r_2$ require an extra step of conjugation by a matrix $X_{1, n}$ at the end. However, this is not necessary since the matrix $X_{1,8}$ is in $SO(8)$ which is connected. Conjugation by $X_{1,8}$ does not change the $K$-theory class.)
\end{proof}

\section{$K$-Theory Generators in Higher Dimensions} \label{Section:S^n unitaries}

It is clear that there is a limit to how much progress can be made with this approach of finding formulas for unitary generators for $K$-theory for each $\fS^d$ separately as in the previous section. In this section, we describe a more general approach to finding a representative of the key generator of $\widetilde{KO}_{d+2}(\fS^d)$.

For each $d$ there is an element $g_d \in \widetilde{KO}_{d+2}(\fS^d) = \Z$ that generates $\widetilde{KO}_*(\fS^d)$ as a free $KO_*(\R)$-module, and that generates $\widetilde{K}\crr(\fS^d)$ as a $\mathcal{CR}$-module. In this section, for each $d$ we will describe how to construct a unitary defined on the sphere, with the proper symmetry, that represents $g_d$. We will not explicitly describe how to obtain unitary generators for the other non-trivial groups of $\widetilde{KO}_*(\fS^d)$. We suffice it to say that these generators can all be obtained from the key generator in $\widetilde{KO}_{d+2}(\fS^d)$ using the natural transformations. Specifically, if $g_d$ is a generator of $\widetilde{KO}_{d+2}(\fS^d)$, 
then $\eta(g_d)$ is a generator of $\widetilde{KO}_{d+3}(\fS^d) = \Z_2$,
 $\eta^2(g_d)$ is a generator of $\widetilde{KO}_{d+4}(\fS^d) = \Z_2$,
 and $\xi(g_d)$ is a generator of $\widetilde{KO}_{d+6}(\fS^d) = \Z$. 
 Hence, when we have a unitary representing $g_d$, we can obtain a formula for the unitaries representing $\eta(g_d)$, $\eta^2(g_d)$, and $\xi(g_d)$ using the formulas for $\eta$ and $\xi$ found in \cite{Boer2020}.

Our main results are Construction~\ref{construction} and Theorem~\ref{main-thm} below. Construction~\ref{construction} is presented as an algorithm to produce unitaries 
and Theorem~\ref{main-thm} states that the unitaries from this construction actually represent generators of the correct $K$-theory groups. This algorithm builds on the construction for complex $K$-theory (from \cite{SBS}), which is re-stated as Part (0) of Construction~\ref{construction}. The real $K$-theory elements are given in Parts (1)-(4). 
First we present some preliminaries on Clifford generators. 

\begin{defn}
Let $k$ be a positive odd integer. A list of self-adjoint matrices $a_1, \dots, a_k \in M_n(\C)$ for some $n$ is called a list of ``complex $k$-Clifford generators" if they satisfy 
$$a_i a_j + a_j a_i = 2 \delta_{i,j} \; $$
for all $i,j$. These elements form an irreducible representation of the the Clifford algebra $\C_k$.
\end{defn}

\begin{const}  \label{construction1}
Let $k$ be an positive odd integer and let $n = 2^{(k-1)/2}$. We consider the following to be the ``standard" list of complex self-adjoint $k$-Clifford generators in $M_n(\C)$, following the construction in Section 2 of \cite{SBS}, constructed inductively.
\begin{itemize} 
\item For $k = 1$, we have $\Gamma_{1,1} = 1$.
\item For $k = 3$, we have 
$$\Gamma_{1,3} = \begin{pmatrix} 0 & 1 \\ 1 & 0 \end{pmatrix} \; , \quad
\Gamma_{2,3} =  \begin{pmatrix} 0 & i \\ -i & 0 \end{pmatrix} \; , \quad
\Gamma_{3,3} =  \begin{pmatrix} 1 & 0 \\ 0 & -1 \end{pmatrix} \; .$$
\item For $k \geq 3$, let $\Gamma_{1,k}, \dots, \Gamma_{k,k}$ be the standard list of complex self-adjoint $k$-Clifford generators in $M_n(\C)$, and then define $\Gamma_{1,k+2}, \dots, \Gamma_{k+2,k+2} \in M_{2n}(\C)$ by
\begin{align*}
\Gamma_{i, k+2} &= \begin{pmatrix} 0 & \Gamma_{i, k} \\ \Gamma_{i,k} & 0 \end{pmatrix}  \; , && i = 1, \dots, k, \\
\Gamma_{k+1,k+2} &=  \begin{pmatrix} 0 & i I_{n} \\ -i I_{n} & 0 \end{pmatrix} \; , \\
\Gamma_{k+2, k+2} &=  \begin{pmatrix} I_{n} & 0 \\ 0 & -I_{n}  \end{pmatrix}  \; .\\
\end{align*}
\end{itemize}
\end{const}

\begin{lemma}\label{Lemma:standardlist}
For positive odd integers $k$, the standard list of complex self-adjoint $k$-Clifford generators satisfies
$$
(\Gamma_{i,k})\T = (-1)^{i+1} \Gamma_{i,k} 
\quad \text{and} \quad
(\Gamma_{i,k})\sT = 
\begin{cases} 
-\Gamma_{i,k} & i \leq 3 \\
(-1)^{i+1} \Gamma_{i,k} & \rm{otherwise.}
\end{cases} $$
\end{lemma}

\begin{proof}
Check this directly for $k = 3$ and then proceed by induction.
\end{proof}

\begin{lemma} \label{transpose}
Let $a_1, \dots, a_k, b_1, \dots, b_\ell$ be a list of complex self-adjoint $(k + \ell)$-Clifford generators satisfying
$$(a_i)\T = a_i \quad \text{and} \quad (b_i)\T = -b_i \; .$$
Define 
$$\widetilde a_i = i (a_1 a_2 \dots \widehat a _i \dots a_k) \text{~for $1 \leq i \leq k$} 
\quad \text{and} \quad \widetilde b _i = i (b_1 b_2 \dots \widehat{b_i} \dots b_\ell) \text{~for $1 \leq i \leq \ell$} \; .
$$
Then
\begin{enumerate}
\item If $\ell \equiv 0 \pmod 4$, then
$a_1, \dots, a_k, \widetilde b_1, \dots, \widetilde b_\ell$ is a list of complex self-adjoint $(k + \ell)$-Clifford generators 
and $(\widetilde{b}_i)\T = \widetilde{b}_i$
\item If $k \equiv 0 \pmod 4$, then
$\widetilde{a}_1, \dots, \widetilde{a}_k, b_1, \dots, b_\ell$ is a list of complex self-adjoint $(k + \ell)$-Clifford generators 
and $(\widetilde{a}_i)\T = -\widetilde{a}_i$
\end{enumerate}
\end{lemma}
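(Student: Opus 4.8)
The plan is to prove both parts by the same mechanism, differing only in a bookkeeping count of signs. Recall that the $\widetilde a_i$ are obtained by deleting one generator from a product of all $k$ of them and multiplying by $i$; the self-adjointness and the Clifford relations for the modified list are essentially formal, so I would organize the proof into three pieces: (a) the new elements are self-adjoint, (b) they satisfy the Clifford relations among themselves and with the unmodified $b_j$'s (resp. $a_j$'s), and (c) the transpose behaves as claimed.

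First I would record the key identity: if $c_1,\dots,c_m$ are self-adjoint Clifford generators and $\omega = c_1 c_2 \cdots c_m$ is their ordered product, then $\omega^2 = (-1)^{m(m-1)/2}$ and $\omega c_j = (-1)^{m-1} c_j \omega$ (each transposition past $\omega$ contributes a sign, with one ``free'' slot). In Part (1) take the $c$'s to be $b_1,\dots,b_\ell$, so $m = \ell$; I want $\widetilde b_i = i\,(b_1\cdots \widehat{b_i}\cdots b_\ell)$. A direct computation gives $\widetilde b_i \widetilde b_j + \widetilde b_j \widetilde b_i = 0$ for $i\neq j$ and $\widetilde b_i^2 = -\big(b_1\cdots\widehat{b_i}\cdots b_\ell\big)^2$; the latter product has $\ell-1$ factors, so its square is $(-1)^{(\ell-1)(\ell-2)/2}$, and when $\ell\equiv 0\pmod 4$ this equals $-1$, yielding $\widetilde b_i^2 = 1$ as required. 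Similarly $a_i \widetilde b_j + \widetilde b_j a_i$: since $a_i$ anticommutes with every $b_r$ except $b_j$ (which is absent from the product), $a_i$ commutes with $b_1\cdots\widehat{b_j}\cdots b_\ell$ precisely when $\ell-1$ is odd, i.e. $\ell$ even — true here — so $a_i\widetilde b_j = -\widetilde b_j a_i$. This establishes that $a_1,\dots,a_k,\widetilde b_1,\dots,\widetilde b_\ell$ is again a list of self-adjoint Clifford generators.

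For the transpose claim in Part (1): $(\widetilde b_i)\T = i\,(b_1\cdots\widehat{b_i}\cdots b_\ell)\T = i\,(b_\ell)\T\cdots(b_1)\T = i\,(-1)^{\ell-1}\, b_\ell b_{\ell-1}\cdots \widehat{b_i}\cdots b_1$ (reversing the order picks up the sign $(-1)^{\ell-1}$ from each $(b_r)\T = -b_r$, and there are $\ell-1$ of them, so actually a factor $(-1)^{\ell-1}$), and then reversing the product $b_\ell\cdots b_1$ back to increasing order costs $(-1)^{\binom{\ell-1}{2}}$. Combining, $(\widetilde b_i)\T = (-1)^{\ell-1 + \binom{\ell-1}{2}}\widetilde b_i$, and for $\ell\equiv 0\pmod 4$ the exponent $\ell-1+\frac{(\ell-1)(\ell-2)}{2}$ is even, giving $(\widetilde b_i)\T = \widetilde b_i$. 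Part (2) is the mirror image: take $c$'s to be the $a_j$'s with $k\equiv 0\pmod 4$; the self-adjoint and Clifford verifications are identical with the roles swapped, and the transpose count now produces a factor $(-1)^{k}$ (from the $k-1$ signs $(a_r)\T = +a_r$ contributing nothing, but now it is the anticommutation of $\widetilde a_i$ with the reversal that matters), landing on $(\widetilde a_i)\T = -\widetilde a_i$ when $k\equiv 0\pmod 4$ — the opposite sign to Part (1), exactly because the $a$'s are symmetric rather than antisymmetric under transpose.

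The main obstacle I anticipate is purely combinatorial sign-accounting: getting the parity of $\binom{\ell-1}{2} + (\ell-1)$ and related exponents right, and being careful that ``reverse the order of a product of anticommuting elements'' contributes $(-1)^{\binom{m}{2}}$, not $(-1)^{m}$ or something else. The cleanest way to avoid errors is to fix, once and for all, the two lemmas ``$(c_1\cdots c_m)\T = (-1)^{\binom m2}(c_1)\T\cdots(c_m)\T$ when the $c_i$ pairwise anticommute'' and ``$\big(\prod_{r\ne i} c_r\big)^2 = (-1)^{\binom{m-1}{2}}$'', and then just substitute $m=\ell$ or $m=k$. No step is deep; the content is that $\ell\equiv 0\pmod 4$ (resp. $k\equiv 0\pmod 4$) is precisely the congruence making all the relevant binomial-coefficient parities cooperate.
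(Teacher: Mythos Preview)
Your approach is correct and essentially identical to the paper's: both proofs verify self-adjointness, the Clifford relations, and the transpose behavior by direct sign-counting, with the key parities governed by $\binom{\ell-1}{2}$ (which the paper writes as the triangular number $\Delta(\ell-2)$). Two small slips to clean up: in the $a_i$--$\widetilde b_j$ step you write ``commutes'' where you mean ``anticommutes'' (the product has $\ell-1$ factors each anticommuting with $a_i$, so the sign is $(-1)^{\ell-1}=-1$ for $\ell$ even), and in your Part~(2) sketch the factor ``$(-1)^k$'' should be $(-1)^{\binom{k-1}{2}}$, which is indeed $-1$ when $k\equiv 0\pmod 4$.
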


\begin{lemma} \label{sharp}
The same statements in Lemma~\ref{transpose} hold if we replace $x\T$ with $x\sT$ throughout.
\end{lemma}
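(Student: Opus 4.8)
The plan is to observe that Lemma~\ref{sharp} is entirely parallel to Lemma~\ref{transpose}, so the proof of Lemma~\ref{transpose} should carry over verbatim once we verify that the involution $x \mapsto x\sT$ (that is, $x \mapsto x^{\sharp \otimes {\rm Tr}}$) shares the two structural properties of the transpose that the argument actually uses: it is an anti-automorphism of the matrix algebra (so it reverses the order of products), and it is multiplicative-compatible with scalars in the sense that $(\lambda x)\sT = \lambda x\sT$ for $\lambda \in \C$ (no conjugation of scalars). Both hold: $\sharp$ is an anti-automorphism of $M_2(\C)$ fixing scalars, and ${\rm Tr}$ here denotes the transpose-type anti-automorphism on $M_n(\C)$, so the tensor $\sharp \otimes {\rm Tr}$ is an anti-automorphism on $M_{2n}(\C)$ fixing the scalar copy of $\C$. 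First I would state these two facts, and in particular record that $(xy)\sT = y\sT x\sT$ and that $(iI)\sT = iI$.

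Next I would run the computation underlying part (1) (the $\ell \equiv 0 \pmod 4$ case). Given $\ell$ self-adjoint anti-invariant generators $b_1,\dots,b_\ell$ (meaning $b_i\sT = -b_i$) and setting $\widetilde b_i = i\, b_1 b_2 \cdots \widehat{b_i} \cdots b_\ell$, one checks: (a) $\widetilde b_i$ is self-adjoint and $\widetilde b_i{}^2 = I$, using that the $b_j$ anticommute and that the scalar $i$ together with the sign picked up from reordering/squaring works out — exactly as in the proof of Lemma~\ref{transpose}; (b) $\widetilde b_i \widetilde b_j + \widetilde b_j \widetilde b_i = 0$ for $i \neq j$, and $\widetilde b_i$ anticommutes with each $a_m$ (since each $a_m$ anticommutes with an even number — namely $\ell - 1$, which is odd... here one is careful: $a_m$ anticommutes with all $b_j$, so with $\ell - 1$ of the factors of $\widetilde b_i$ when $i$ is among them, or all $\ell$ otherwise; parity of $\ell$ being $\equiv 0 \pmod 4$ is what makes the signs land correctly) — again identical bookkeeping to Lemma~\ref{transpose}; and (c) the $\sT$-symmetry: $\widetilde b_i{}\sT = (i\, b_1 \cdots \widehat{b_i} \cdots b_\ell)\sT = i\, b_\ell\sT \cdots \widehat{b_i\sT} \cdots b_1\sT = i(-1)^{\ell-1} b_\ell \cdots \widehat{b_i} \cdots b_1$, and then reversing the product back to increasing order costs a sign $(-1)^{\binom{\ell-1}{2}}$; with $\ell \equiv 0 \pmod 4$ one computes $(-1)^{\ell - 1}(-1)^{\binom{\ell-1}{2}} = +1$, giving $\widetilde b_i{}\sT = \widetilde b_i$. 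Part (2) ($k \equiv 0 \pmod 4$) is the symmetric statement with $a$'s and $b$'s swapped and the sign coming out to $-1$ instead, yielding $\widetilde a_i{}\sT = -\widetilde a_i$.

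Because every step above only invoked the anti-automorphism property and the scalar-fixing property of $\sT$, never anything specific to transposition, the cleanest write-up is: \emph{the proof of Lemma~\ref{transpose} applies verbatim with $\T$ replaced by $\sT$ throughout, since $x \mapsto x\sT$ is likewise a $\C$-linear anti-automorphism of $M_{2n}(\C)$ satisfying $(iI)\sT = iI$.} I would phrase the proof essentially as that one-paragraph remark, perhaps after explicitly isolating the two properties of $\sT$ being used so the reader can see nothing else is needed.

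The main obstacle — really the only one — is making sure the sign arithmetic in the $\sT$-symmetry step genuinely gives the same answer as in Lemma~\ref{transpose}'s transpose computation. The potential worry is that $\sharp$ on $M_2(\C)$ might interact with the product reversal differently than the ordinary transpose does. But since the only algebraic inputs to the sign count are (i) reversing a product of $\ell-1$ factors and (ii) each factor being anti-fixed by the involution, and both of these are shared by $\T$ and $\sT$, the counts coincide. If one wanted to be maximally safe one could instead note that on the relevant subalgebra generated by the Clifford elements, $\sT$ and $\T$ in fact agree up to the explicit signs recorded in the first Lemma of this section, and deduce Lemma~\ref{sharp} from Lemma~\ref{transpose} that way; but the ``verbatim'' argument is cleaner and I would go with it.
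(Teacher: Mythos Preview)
Your proposal is correct and matches the paper's own proof exactly: the paper simply notes that the proof of Lemma~\ref{sharp} is identical to that of Lemma~\ref{transpose} because both involutions ${\rm Tr}$ and $\sharp \otimes {\rm Tr}$ share the same formal properties of being linear and antimultiplicative. Your explicit isolation of those two properties and the verification that the sign arithmetic goes through unchanged is precisely the content of that remark.
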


For any positive integer $k$, let $\Delta(k) =\tfrac{k(k+1)}{2}$ be the $k$th triangular number. Note that $\Delta(k-1)$ is the number of pair-wise adjacent transpositions involved to transform a product $x_1 \dots x_k $ into $ x_k \dots x_1$ and that $\Delta(k)$ is even if and only if $k \equiv 3$ or $k \equiv 4 \pmod 4$.

\begin{proof}[Proof of Lemmas~\ref{transpose} and \ref{sharp}.]
For part (1), check first that
\begin{align*}
(\widetilde b_i)^* &= -i (b_1 b_2 \dots \widehat{b_i} \dots b_\ell)^* \\
	&= -i (b_\ell \dots \widehat{b_i} \dots b_2 b_1) \\
	&= -i (-1)^{\Delta(\ell-2)} (b_1 b_2 \dots \widehat{b_i} \dots b_\ell) \\
	&= \widetilde b_i 
\end{align*}
since $\ell-2 \equiv 2 \pmod 4$. From this it follows that $\widetilde b_i$ are self-adjoint unitaries. In particular $(\widetilde b_i)^2 = 1$.

Similarly we have
\begin{align*}
(\widetilde b_i)\T &=  i (b_1 b_2 \dots \widehat{b_i} \dots b_\ell)\T \\
	& = i (-1)^{\Delta(\ell-2)} (b_1\T b_2\T \dots \widehat{b_i} \dots b_\ell\T) \\
	&= i (-1)^{\Delta(\ell-2)} (-1)^{\ell-1} (b_1 b_2 \dots \widehat{b_i} \dots b_\ell) \\
	&= \widetilde b_i \; .
\end{align*}
We also compute, for all $1 \leq i \leq k, 1 \leq j \leq \ell$
\begin{align*}
a_i \widetilde b_j &= i a_i (b_1 \dots \widehat{b}_j \dots b_\ell) \\
	&= i (-1)^{\ell-1} (b_1 \dots \widehat{b}_j \dots b_\ell) a_i \\
	&= - \widetilde b_j a_i
\end{align*}
and, for all $1 \leq i < j \leq \ell$
\begin{align*}
\widetilde b_i \widetilde b_j 
	&= i^2 (b_1 \dots \widehat{b}_i \dots b_\ell)(b_1 \dots \widehat{b}_j \dots b_\ell) \\
	&= i^2 (-1)^{\ell-1} (-1)^{\ell-2} (b_1 \dots \widehat{b}_j \dots b_\ell)(b_1 \dots \widehat{b}_i \dots b_\ell) \\
	&= - \widetilde b_j \widetilde b_i \; .
\end{align*}
This proves Part (1).

For Part (2) we set $\widetilde a_i = i (a_1 a_2 \dots \widehat a _i \dots a_k)$. 
Then we have 
\begin{align*}
(\widetilde a_i)\T &=  i (a_1 a_2 \dots \widehat{a_i} \dots a_k)\T \\
	& = i (-1)^{\Delta(k-2)} (a_1\T a_2\T \dots \widehat{a_i} \dots a_k\T) \\
	&= i (-1)^{\Delta(k-2)}  (a_1 a_2 \dots \widehat{a_i} \dots a_k) \\
	&= - \widetilde a_i; .
\end{align*}
The proof is otherwise the same  as for Part (1). 

The proof of Lemma~\ref{sharp} is identical to that of Lemma~\ref{transpose}, since both involutions ${\rm Tr}$ and $\sharp \otimes {\rm Tr}$ have the same formal properties -- they are linear and antimultiplicative.
\end{proof}

\begin{const} ~ \label{construction}

For $k$ odd, this construction shows how to produce particular unitaries $Q_{k-1}\pr$ and $U_k\pr$, which ultimately we show to represent the generators of the $KO$-groups indicated. We include here the key statements about which $KO$-groups they live in for later reference;  these statements will be justified by Theorem~\ref{main-thm} below.

We also include the construction in the complex case for completeness. In each case, start with the standard complex Clifford generators $\Gamma_{1,k}, \dots, \Gamma_{k,k} \in M_n(\C)$ where $n = 2^{(k-1)/2}$ from Construction~\ref{construction1}.

\begin{enumerate}
\item[(0)] Let $k \equiv 1 \pmod 2$.
\begin{itemize} 
\item Let $Q_{k-1}(x) = \sum_{i = 1}^k x_i \Gamma_{i,k}$ and $U_k(x) = \sum_{i = 1}^k x_i \Gamma_{i,k} + x_{k+1} i I_n$.
\item Then $[Q_{k-1}] \in K_{0}(C(S^{k-1}), \C)$ and $[U_k] \in K_{1}(C(S^k), \C)$.
\end{itemize}

\vspace{.5cm}

\item[(1)] Let $k \equiv -1 \pmod 8$.
\begin{itemize}
\item Let $$  S=  \{j \in  \{1, \dots, k\} \mid \text{$j$ is odd} \} \; .$$
\item Define $\Up_{1,k}, \dots, \Up_{k,k} \in M_n(\C)$ by
$$\Up_{i,k} =
	\begin{cases} 
			 i \prod_{ j \in S \backslash \{i\}} \Gamma_{j,k} & i \in S \\ \hspace{.5cm} \Gamma_{i,k} & i \notin S 
	\end{cases} $$
\item Then $\Up\T_{i,k} =  - \Up_{i,k} \; $ for all $i$.
\item Let $Q\pr_{k-1}(x) = \sum_{i = 1}^k x_i \Up_{i,k}$ and $U\pr_k(x) = \sum_{i = 1}^k x_i \Up_{i,k} + x_{k+1} i I_n$.
\item Then $[Q\pr_{k-1}] \in KO_{0}(\fS^{k-1})$ and $[U\pr_k] \in KO_{1}(\fS^{k})$.
\end{itemize}

\vspace{.5cm}

\item[(2)] Let $k \equiv 1 \pmod 8$.
\begin{itemize}
\item Let $$  S=  \{j \in  \{1, \dots, k\} \mid \text{$j$ is even} \} \; .$$
\item Define $\Up_{1,k}, \dots, \Up_{k,k} \in M_n(\C)$ by
$$\Up_{i,k} =
	\begin{cases} 
			 i  \prod_{ j \in S \backslash \{i\}} \Gamma_{j,k} & i \in S \\ \hspace{.5cm} \Gamma_{i,k} & i \notin S 
	\end{cases} $$
\item Then $\Up\T_{i,k} =   \Up_{i,k} \; $ for all $i$.
\item Let $Q\pr_{k-1}(x) = \sum_{i = 1}^k x_i \Up_{i,k}$ and $U\pr_k(x) = \sum_{i = 1}^k x_i \Up_{i,k} + x_{k+1} i I_n$.
\item Then $[Q\pr_{k-1}] \in KO_{2}(\fS^{k-1})$ and $[U\pr_k] \in KO_{3}(\fS^{k})$.
\end{itemize}

\vspace{.5cm}

\item[(3)] Let $k \equiv 3 \pmod 8$.
\begin{itemize}
\item Let $$  S=  \{j \in  \{1, \dots, k\} \mid \text{$j$ is odd and $j \geq 5$} \} \; .$$
\item Define $\Up_{1,k}, \dots, \Up_{k,k} \in M_n(\C)$ by
$$\Up_{i,k} =
	\begin{cases} 
			 i  \prod_{ j \in S \backslash \{i\}} \Gamma_{j,k} & i \in S \\ \hspace{.5cm} \Gamma_{i,k} & i \notin S 
	\end{cases} $$
\item Then $\Up\sT_{i,k} =  - \Up_{i,k} \; $ for all $i$.
\item Let $Q\pr_{k-1}(x) = \sum_{i = 1}^k x_i \Up_{i,k}$ and $U\pr_k(x) = \sum_{i = 1}^k x_i \Up_{i,k} + x_{k+1} i I_n$.
\item Then $[Q\pr_{k-1}] \in KO_{4}(\fS^{k-1})$ and $[U\pr_k] \in KO_{5}(\fS^{k})$.
\end{itemize}

\vspace{.5cm}

\item[(4)] Let $k \equiv 5 \pmod 8$.
\begin{itemize}
\item Let $$  S=  \{j \in  \{1, \dots, k\} \mid \text{$j$ is even or $j \leq 3$} \}  \; . $$
\item Define $\Up_{1,k}, \dots, \Up_{k,k} \in M_n(\C)$ by
$$\Up_{i,k} =
	\begin{cases} 
			 i \prod_{ j \in S \backslash \{i\}} \Gamma_{j,k} & i \in S \\ \hspace{.5cm} \Gamma_{i,k} & i \notin S 
	\end{cases} $$
\item Then $\Up\sT_{i,k} =   \Up_{i,k} \; $ for all $i$.
\item Let $Q\pr_{k-1}(x) = \sum_{i = 1}^k x_i \Up_{i,k}$ and $U\pr_d(x) = \sum_{i = 1}^k x_i \Up_{i,k} + x_{k+1} i I_n$.
\item Then $[Q\pr_{k-1}] \in KO_{6}(\fS^{k-1})$ and $[U\pr_k] \in KO_{7}(\fS^{k})$.
\end{itemize}
\vspace{.5cm}
\end{enumerate}

Note that in each case $|S| \equiv 0 \pmod 4$, so that Lemma~\ref{transpose} and \ref{sharp} will apply (see Lemma~\ref{lemma:sigma-generators} below).
We also note that in the products that define $\Up_{i,k}$ for $i \in S$ in Parts (1)-(4) above, the order of the factors 
matters since the Clifford generators do not commute. The proper order is always taken to correspond to the natural order of increasing integer indices $j \in S \backslash \{ i \}$ from left to right. (Any different order would also do, as long as it was the same order used for all the $\Up_{i,k}$.)
\end{const}

In the complex case, we know that the elements $[Q_{k-1}]$ and $[U_k]$ from Part (0) of Construction~\ref{construction} represent generators of $K_{0}(C(S^{k-1}), \C) = \Z$ and $K_{1}(C(S^k), \C) = \Z$ respectively from Proposition~1 in \cite{SBS}. Lemmas~\ref{lemma:sigma-generators} and \ref{lemma:KO} below confirm the statements made in the second and forth bullet point of each item of Construction~\ref{construction}.

\begin{lemma} \label{lemma:sigma-generators}
Suppose that $k$ is odd. In each of the four cases of Construction~\ref{construction}, the constructed list $\Up_{1,k}, \dots, \Up_{k,k}$ consists of Clifford generators that satisfy the following:
\begin{enumerate}
\item for $k \equiv -1 \pmod 8$ and all $1 \leq i \leq k$, we have $\Up_{i,k}\T = -\Up_{i,k}$.
\item for $k \equiv 1 \pmod 8$ and all $1 \leq i \leq k$, we have $\Up_{i,k}\T = \Up_{i,k}$.
\item for $k \equiv 3 \pmod 8$ and all $1 \leq i \leq k$, we have $\Up_{i,k}\sT = -\Up_{i,k}$.
\item for $k \equiv 5 \pmod 8$ and all $1 \leq i \leq k$, we have $\Up_{i,k}\sT = \Up_{i,k}$.
\end{enumerate}
\end{lemma}

\begin{proof}
First suppose that $k \equiv -1 \pmod 8$. For the standard list of $k$-Clifford generators, we know from Lemma~\ref{Lemma:standardlist} that 
$\Gamma_{i,k}$ is symmetric exactly when $i$ is odd and is otherwise skew-symmetric.
The number of such odd integers is congruent to $0 \pmod 4$. Then by Part (1) of Lemma~\ref{transpose}, the construction of $\Up_{i,k}$ toggles the behavior in the odd cases
so that $\Up_{i,k}$ is skew-symmetric for all $i$.

Now if $k \equiv 1 \pmod 8$, then the construction of $\Up_{i,k}$ is achieved by toggling exactly the skew-symmetric generators, so that $\Up_{i,k}$ turn out to be symmetric for all $i$.

In the cases for $k \equiv 3 \pmod 8$ and $k \equiv 5 \pmod 8$, the same scheme is at play to ensure that $\Gamma_{i,k}$ are skew-$\sharp$-symmetric in the first case and $\sharp$-symmetric in the second case.
\end{proof}

\begin{lemma} \label{lemma:KO}
Suppose that $k$ is odd. 
In each of the four real cases, the unitaries defined in Construction~\ref{construction} satisfiy the right symmetries so that  $[Q\pr_{k-1}] \in KO_{k+1}(\fS^{k-1}) = \Z$ and $[U\pr_k] \in KO_{k+2}(\fS^k) = \Z$. 
\end{lemma}

In this situation, we are using the alternative version the unitary picture of $KO_3$ and $KO_7$ shown in the second line in the appropriate point of Table~\ref{unitaryTable} and justified by Propositions~\ref{prop-K(3)} and \ref{prop-K(-1)} in the Appendix.

\begin{proof}
First, it is easy to check that $Q\pr_{k-1}$ is a self-adjoint unitary and that $U\pr_k$ is a unitary in each case, from the fact that
$\Up_{i,k}$ are self-adjoint unitaries.

In the case $k \equiv -1 \pmod 8$, the Clifford generators $\Up_{1, k}, \dots, \Up_{k,k}$ are anti-symmetric. Hence
\begin{align*}
	(Q\pr_{k-1}) \Ttau(x)   
		&= \left( \sum_{i = 1}^k x_i \Up_{i,k} \right) \Ttau \\
		&= \sum_{i = 1}^{k} (-x_i) \Up_{i,k}\T\\
		&= \sum_{i = 1}^{k} (-x_i) ( - \Up_{i,k}) \\
		&= Q\pr_{k-1} (x) \;  \\
\text{and} \quad 
	(U\pr_k)\Ttau(x)   
		&= \left( \,  \sum_{i = 1}^{k} x_i \Up_{i,k}  + x_{k+1} i \, I_n \right) \Ttau \\
		&= \left( \,  \sum_{i = 1}^{k} (-x_i) (-\Up_{i,k})  + (-x_{k+1}) i \, I_n \right)  \\
		&= \left( \,  \sum_{i = 1}^{k} x_i \Up_{i,k} - x_{k+1} i \, I_n \right)  \\
		&= (U\pr_k)^*(x) \; .
\end{align*}
Therefore $[Q\pr_{k-1}] \in KO_0(\fS^{k-1})$ and $[U\pr_{k}] \in KO_1(\fS^{k})$.

Now, assume that $k \equiv 1 \pmod 8$. Then the Clifford generators $\Up_{1, k}, \dots, \Up_{k,k}$ are symmetric, so we have
\begin{align*}
	(Q\pr_{k-1})\Ttau(x)   
		&= \left( \sum_{i = 1}^k x_i \Up_{i,k} \right) \Ttau \\
		&= \sum_{i = 1}^{k} (-x_i) (  \Up_{i,k}) \\
		&= -Q\pr_{k-1} (x) \;  \\
\text{and} \quad 
	(U\pr_k)\Ttau(x)   
		&= \left( \,  \sum_{i = 1}^{k} x_i \Up_{i,k}  + x_{k+1} i \, I_n \right) \Ttau \\
		&= \left( \,  \sum_{i = 1}^{k} (-x_i) (\Up_{i,k})  + (-x_{k+1}) i \, I_n \right)  \\
		&= -U\pr_k(x) \; .
\end{align*}
It follows that $[Q\pr_{k-1}] \in KO_2(\fS^{k-1})$ and $[U\pr_{k}] \in KO_3(\fS^{k})$.

In the third case, $k \equiv 3 \pmod 8$, the elements $\Up_{1, k}, \dots, \Up_{k,k}$ satisfy $\Up_{i,k}\sT = - \Up_{i,k}$. Thus
\begin{align*}
	(Q\pr_{k-1}) \sTtau(x)   
		&= \left( \sum_{i = 1}^k x_i \Up_{i,k} \right) \sTtau \\
		&= \sum_{i = 1}^{k} (-x_i) \Up_{i,k}\sT\\
		&= \sum_{i = 1}^{k} (-x_i) ( - \Up_{i,k}) \\
		&= Q\pr_{k-1} (x) \;  \\
\text{and} \quad 
	(U\pr_k)\sTtau(x)   
		&= \left( \,  \sum_{i = 1}^{k} x_i \Up_{i,k}  + x_{k+1} i \, I_n \right) \sTtau \\
		&= \left( \,   \sum_{i = 1}^{k} x_i \Up_{i,k} - x_{k+1} i \, I_n \right)  \\
		&= (U_k\pr)^*(x) \; .
\end{align*}
Thus $[Q\pr_{k-1}] \in KO_4(\fS^{k-1})$ and $[U\pr_{k}] \in KO_5(\fS^{k})$.

Finally, if $k \equiv 5 \pmod 8$ we have $\Up_{i,k}\sT = \Up_{i,k}$ for all $i$ so
\begin{align*}
	(Q\pr_{k-1}) \sTtau(x)   
		&= \left( \sum_{i = 1}^k x_i \Up_{i,k} \right) \sTtau \\
		&= \sum_{i = 1}^{k} (-x_i) (  \Up_{i,k}) \\
		&= -Q\pr_{k-1} (x) \;  \\
\text{and} \quad 
	(U\pr_k)\sTtau(x)   
		&= \left( \,  \sum_{i = 1}^{k} x_i \Up_{i,k}  + x_{k+1} i \, I_n \right) \Ttau \\
		&= \left( \,  \sum_{i = 1}^{k} (-x_i) (\Up_{i,k})  + (-x_{k+1}) i \, I_n \right)  \\
		&= -U\pr_k(x) \; .
\end{align*}
It follows that $[Q\pr_{k-1}] \in KO_6(\fS^{k-1})$ and $[U\pr_{k}] \in KO_{7}(\fS^{k})$.
\end{proof}

\begin{thm} \label{main-thm}
In the real case, $[Q\pr_{k-1}]$ and $[U\pr_k]$ from Parts (1)-(4) of Construction~\ref{construction} represent generators of $KO_{k+1}(\fS^{k-1}) = \Z$ and $KO_{k+2}(\fS^k) = \Z$ respectively, for odd $k$. In other words,
$$g_{k-1} = [Q\pr_{k-1}] \qquad \text{and} \qquad g_k = [U\pr_k] \; .$$
\end{thm}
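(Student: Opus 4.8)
The plan is to establish two things: (A) that $Q\pr_{k-1}$ and $U\pr_k$ are legitimate representatives of elements of $KO_{k+1}(\fS^{k-1})$ and $KO_{k+2}(\fS^k)$, i.e., that they are (self-adjoint) unitaries on the relevant sphere satisfying the symmetry relation of Table~\ref{unitaryTable} for degree $(k+1) \bmod 8$, resp. $(k+2)\bmod 8$; and (B) that these elements are in fact generators. Claim (A) reduces to two facts about the modified generators: first, that $\Up_{1,k},\dots,\Up_{k,k}$ is again a list of complex $k$-Clifford generators -- so that $Q\pr_{k-1}(x) = \sum_i x_i\Up_{i,k}$ is a self-adjoint unitary for each $x\in S^{k-1}$ and $U\pr_k(x) = \sum_i x_i\Up_{i,k} + x_{k+1} iI$ a unitary for each $x\in S^k$, exactly as in Part~(0) of Construction~\ref{construction} -- and second, that the $\Up_{i,k}$ transform uniformly under (quaternionic) transpose: $\Up_{i,k}\T = -\Up_{i,k}$ for all $i$ when $k\equiv -1\pmod 8$, $\Up_{i,k}\T = \Up_{i,k}$ when $k\equiv 1\pmod 8$, $\Up_{i,k}\sT = -\Up_{i,k}$ when $k\equiv 3\pmod 8$, and $\Up_{i,k}\sT = \Up_{i,k}$ when $k\equiv 5\pmod 8$.

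For the Clifford relations I would introduce $\omega = \prod_{j\in S} i\Gamma_{j,k}$ with factors in increasing order of index. One checks that $|S|$ is a multiple of $4$ in each of the four cases, which forces $\omega$ to be a self-adjoint unitary that anticommutes with $\Gamma_{j,k}$ for $j\in S$ and commutes with $\Gamma_{j,k}$ for $j\notin S$. Since $\Up_{i,k} = \pm(i\Gamma_{i,k})\,\omega$ for $i\in S$, the self-adjointness and the anticommutation relations for the list $\Up_{1,k},\dots,\Up_{k,k}$ follow, the only subtlety being repeated use of the identity that reversing a product of $m$ pairwise-anticommuting involutions introduces the sign $(-1)^{m(m-1)/2}$, together with $|S|\equiv 0\pmod 4$. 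For the transpose behavior, Construction~\ref{construction1} gives $\Gamma_{i,k}\T = (-1)^{i+1}\Gamma_{i,k}$ by an easy induction; and since each $\Gamma_{i,k}$ is a concrete tensor product of Pauli matrices and identities and $M^\sharp = (\operatorname{tr} M)I_2 - M$ is $\C$-linear on $M_2(\C)$, unwinding the tensor structure yields the companion formula $\Gamma_{j,k}\sT = -\Gamma_{j,k}$ when $j$ is even or $j\in\{1,3\}$ and $\Gamma_{j,k}\sT = \Gamma_{j,k}$ when $j$ is odd and $\geq 5$. Substituting into the product defining $\Up_{i,k}$ and tracking the reversal sign once more, one checks that the set $S$ chosen in each Part is calibrated precisely so that $\Up_{i,k}\T$ (resp. $\Up_{i,k}\sT$) is the same global scalar $\pm\Up_{i,k}$ for every index $i$.

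To turn these symmetries into the relations of Table~\ref{unitaryTable}, the key point is that under $\fS^d\otimes\C\cong C(S^d,\C)$ the involution $\tau$ is precomposition with the antipodal map (with no complex conjugation), so for a matrix-valued function $F$ one has $(F\Ttau)(x) = F(-x)\T$ and $(F\sTtau)(x) = F(-x)\sT$. Since $Q\pr_{k-1}(-x) = -Q\pr_{k-1}(x)$, $U\pr_k(-x) = -\sum_i x_i\Up_{i,k} - x_{k+1} iI$, $Q\pr_{k-1}(x)^* = Q\pr_{k-1}(x)$, and $iI$ is fixed by both $\T$ and $\sT$, a short computation then shows that $Q\pr_{k-1}$ satisfies the relation listed for $KO_{(k+1)\bmod 8}$ and $U\pr_k$ the relation listed for $KO_{(k+2)\bmod 8}$; for $U\pr_k$ in the cases $k\equiv 1,5\pmod 8$ this is the second variant (``$u\Ttau = -u$'', resp.\ ``$u\sTtau = -u$'') of the $KO_3$ and $KO_{-1}$ pictures, which explains why that alternate picture is needed. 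This proves (A). For (B) I would complexify: the lists $\Up_{1,k},\dots,\Up_{k,k}$ and $\Gamma_{1,k},\dots,\Gamma_{k,k}$ are unitarily equivalent once the symmetry structure is disregarded, so $c[Q\pr_{k-1}] = \pm[Q_{k-1}]$ and $c[U\pr_k] = \pm[U_k]$, which generate $\widetilde{KU}_{k+1}(\fS^{k-1}) = \Z$ and $\widetilde{KU}_{k+2}(\fS^k) = \Z$ by Proposition~1 of \cite{SBS}. By Theorem~\ref{fS^d} the module $\widetilde{K}\crt(\fS^d)\cong\Sigma^{-d-2}K\crt(\R)$ is free on a single generator $g_d$ lying in the real part in degree $d+2$; since $c\colon KO_0(\R)\to KU_0(\R)$ is an isomorphism, so is $c\colon\widetilde{KO}_{d+2}(\fS^d)\to\widetilde{KU}_{d+2}(\fS^d)$ (alternatively, read this from (\ref{eq:CR-LES}) using $\widetilde{KO}_{d+1}(\fS^d) = \widetilde{KO}_d(\fS^d) = 0$). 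Hence $[Q\pr_{k-1}] = \pm g_{k-1}$ and $[U\pr_k] = \pm g_k$, as asserted.

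The step I expect to be the main obstacle is the sign bookkeeping in the second fact for the quaternionic cases $k\equiv 3,5\pmod 8$: there one must simultaneously track the tensor-factor description of each $\Gamma_{i,k}$, which $M_2(\C)$-factor the operation $\sharp\otimes{\rm Tr}$ singles out, the signs contributed by the $\sigma_2$-factors under ordinary transpose, and the sign from reversing the $|S|-1$ anticommuting factors in $\Up_{i,k}$ -- and verify that with the prescribed $S$ these all combine into a single $i$-independent sign. The other ingredients (the Clifford relations, the identification of $\tau$ with antipodal precomposition, and the complexification argument) are comparatively routine. Finally, a couple of small exceptional cases ($k = 1$, where $\fS^0\cong\C$ and $\fS^1\cong T$ have already been treated, and $k = 3$ for $Q\pr_2$, where the $2\times 2$ representative must be read inside a matrix whose size is a multiple of $4$) are checked directly and agree with Tables~\ref{fS1unitary} and~\ref{fS2unitary}.
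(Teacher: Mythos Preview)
Your proposal is correct and follows essentially the same route as the paper. The paper splits your claim~(A) across several lemmas---one computing $(\Gamma_{i,k})\T$ and $(\Gamma_{i,k})\sT$, one showing the modified list is again a Clifford family with the prescribed (anti)symmetry under $\T$ or $\sT$ (using exactly your observation that $|S|\equiv 0\pmod 4$ in each case), and a theorem verifying the Table~\ref{unitaryTable} relations for $Q\pr_{k-1}$ and $U\pr_k$---and then proves (B) in a few lines by the same complexification argument: $c$ is an isomorphism in the relevant degree by the free $\mathcal{CR}$-module structure of $\widetilde{K}\crr(\fS^d)$ from Theorem~\ref{fS^d}, and $c[Q\pr_{k-1}]$, $c[U\pr_k]$ are generators of complex $K$-theory by \cite{SBS}. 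Your packaging via the single element $\omega=\prod_{j\in S} i\Gamma_{j,k}$ is a clean way to organize the Clifford and sign checks, but the content is the same.
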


\begin{proof}
From Lemma~\ref{lemma:KO} we know that $[Q\pr_{k-1}]$ and $[U\pr_k]$ from Parts (1)-(4) of Construction~\ref{construction} represent elements of $KO_{k+1}(\fS^{k-1}) = \Z$ and $KO_{k+2}(\fS^k) = \Z$. It remains to show that the elements they represent are generators of the given groups.

The complexification map 
$$c_{k+1} \colon \widetilde{KO}_{k+1}(\fS^{k-1}) \rightarrow \widetilde{KU}_{k+1}(\fS^{k-1})$$ is defined by forgetting the extra symmetry that a self-adjoint unitary representing real $K$-theory must satisfy. In this case at hand, $c_{k+1}$ is known to be an isomorphism, because of the structure of the free $\mathcal{CR}$-module, namely that
$\widetilde{K}_*\crr(\fS^{k-1})$ is a free $\mathcal{CR}$-module with generator in $\widetilde{KO}_{k+1}(\fS^{k-1})$.
From Proposition~1 of \cite{SBS} we know that $[Q_{k-1}]$ represents a generator of 
$\widetilde{KU}_{k+1}(\fS^{k-1}) = \Z$. It follows that
$[Q\pr_{k-1}]$ must be a generator of $\widetilde{KO}_{k+1}(\fS^{k-1})$. 
The same argument shows that
$[U\pr_k]$ is a generator of $\widetilde{KO}_{k+2}(\fS^{k})$, because we know from Proposition~1 of \cite{SBS} that the same unitary represents a generator of $\widetilde{KU}_{k+2}(\fS^{k}) = \Z$
\end{proof}

\section{Appendix: An alternative unitary picture for $KO_{-1}$ and $KO_3$.} \label{newunitaries}

In Table~\ref{unitaryTable} describing the unitary picture of $KO_i(A, \tau)$, there are extra lines showing alternative unitary symmetries for $KO_{3}(A, \tau)$ and for $KO_{7}(A, \tau)$, which are different than those developed in \cite{BL}. In this section we will prove that the two unitary pictures of $KO$-theory in these cases are equivalent, so that either picture can be used. We also indicate how to translate from a unitary representing $KO_i(A, \tau)$ in one picture to a unitary representing $KO_i(A, \tau)$ in the other picture.

\begin{lemma}
Let $w = \left( \begin{smallmatrix} 0 & 1 \\ -1 & 0 \end{smallmatrix} \right)$. Then for any $x \in M_2(\C)$ we have 
\begin{enumerate}
\item $x^\sharp = w^* x^{\rm{Tr}} w$ 
\item $x^{\rm{Tr}} = w x^{\sharp} w^*$
\item $x^\sharp = w x^{\rm{Tr}} w^*$ 
\item $x^{\rm{Tr}} = w^* x^{\sharp} w$
\end{enumerate} \end{lemma}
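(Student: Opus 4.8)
The plan is to verify identity (1) by a direct $2\times 2$ matrix multiplication and then deduce (2)--(4) from (1) using two elementary facts about $w$, with essentially no further computation.

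First I would write a general $x = \sm{a}{b}{c}{d}$, so that $x^{\rm{Tr}} = \sm{a}{c}{b}{d}$ and, from the definition of $\sharp$, $x^\sharp = \sm{d}{-b}{-c}{a}$. Since $w^* = \sm{0}{1}{-1}{0}$, left multiplication by $w^*$ interchanges the two rows of a matrix and negates the resulting bottom row, while right multiplication by $w = \sm{0}{-1}{1}{0}$ interchanges the two columns and negates the resulting right column. Applying both operations to $x^{\rm{Tr}}$ one obtains exactly $\sm{d}{-b}{-c}{a} = x^\sharp$, which is identity (1).

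Next I would record the two facts $w w^* = w^* w = I_2$ and $w = -w^*$ (equivalently $w^2 = -I_2$). Conjugating (1) by $w$ on the left and $w^*$ on the right and using $w w^* = I_2$ gives $x^{\rm{Tr}} = w x^\sharp w^*$, which is (2). Finally, because $w = -w^*$, for every $y \in M_2(\C)$ we have $w^* y w = (-w)\, y\, (-w^*) = w y w^*$; taking $y = x^{\rm{Tr}}$ turns (1) into (3), and taking $y = x^\sharp$ turns (2) into (4).

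I do not expect any genuine obstacle here: the content is a short computation together with the observation $w = -w^*$, which makes conjugation by $w$ and by $w^*$ coincide and so collapses (3) and (4) into restatements of (1) and (2). The only care needed is sign bookkeeping and consistency with the paper's convention for $\sharp$ and for the chosen isomorphism $M_2(\C)\otimes M_n(\C)\cong M_{2n}(\C)$; alternatively, one may simply multiply out all four identities by brute force.
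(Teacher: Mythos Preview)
Your proposal is correct and essentially the same as the paper's: both verify (1) by direct computation on a generic $2\times 2$ matrix and then derive the remaining identities from the relation $w^2=-1_2$ (equivalently $w=-w^*$) together with the fact that $w$ is unitary. The only cosmetic difference is the order of deductions: the paper goes $(1)\Rightarrow(3)$ via $w^2=-1_2$ and then obtains $(2),(4)$ by conjugation, whereas you first conjugate to get $(2)$ and then use $w=-w^*$ to pass from $(1),(2)$ to $(3),(4)$.
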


\begin{proof}
The first statement follows from direct computation on $x = \left( \begin{smallmatrix} a & b \\ c & d \end{smallmatrix} \right)$. Statement (3) follows from Statement (1) and the fact that $w^2 = -1_2$, which is in the center of $M_2(\C)$, by
\begin{align*}
x^\sharp &= w^2 x^\sharp (w^*)^2 \\
	&= w^2 (w^* x^{\rm{Tr}} w) (w^*)^2 \\
	&= w x^{\rm{Tr}} w^*  \; .
\end{align*}
Statements (2) and (4) follow immediate from Statements (1) and (3) by conjugation.
\end{proof}

\begin{lemma} \label{lemma-sharp/trace}
Let $(A, \tau)$ be a $C \sp *$-algebra with antimultiplicative involution. Let $w = \left( \begin{smallmatrix} 0 & 1 \\ -1 & 0 \end{smallmatrix} \right)$.
Then a unitary $u \in M_2(A)$ satisfies
$u\Ttau =  u$ if and only if $v = u w$ satisfies $v\stau = - v$.
Furthermore, a unitary $u \in M_2(A)$ satisfies
$u\Ttau =  -u$ if and only if $v = u w$ satisfies $v\stau =  v$.
\end{lemma}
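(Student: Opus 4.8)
The plan is to derive both asserted equivalences from a single algebraic identity expressing $v\stau$ in terms of $u\Ttau$, obtained by combining antimultiplicativity of the two involutions with the preceding lemma.

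First I would record the elementary facts about $w=\left(\begin{smallmatrix}0&1\\-1&0\end{smallmatrix}\right)$: it is a unitary with $w^2=-1_2$, so $w^{-1}=w^*=-w$; and since the entries of $w$ are real, both $w\Ttau$ and $w\stau$ equal $w^\sharp=-w$. I would then note that the conjugation formulas of the preceding lemma, although stated for matrices in $M_2(\C)$, hold verbatim on all of $M_2(A)=M_2(\C)\otimes A$, because conjugation by a scalar matrix acts only on the $M_2(\C)$ tensor factor and hence commutes with $\tau$. In particular, statement (3) of that lemma (which is unchanged under $w\mapsto -w$, and so applies to the matrix $w$ of the present lemma) gives $x\stau = w\,x\Ttau\,w^*$ for every $x\in M_2(A)$.

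The heart of the argument is then the computation, using antimultiplicativity of $\sharp\otimes\tau$, then the extended conjugation identity, then $w^2=-1_2$ and $w^*=-w$:
\begin{align*}
v\stau &= (uw)\stau = w\stau\, u\stau = (-w)\bigl(w\, u\Ttau\, w^*\bigr) \\
&= -w^2\, u\Ttau\, w^* = u\Ttau\, w^* = -\,u\Ttau\, w \, .
\end{align*}
Since $w$ is invertible, right-multiplying by $w^{-1}$ turns this identity into equivalences: $v\stau = -v = -uw$ holds if and only if $u\Ttau\, w = uw$, i.e.\ if and only if $u\Ttau = u$; and $v\stau = v = uw$ holds if and only if $-u\Ttau\, w = uw$, i.e.\ if and only if $u\Ttau = -u$. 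These are exactly the two statements of the lemma; both directions are automatic since $u\mapsto v=uw$ is a bijection (with inverse $v\mapsto -vw$) on the unitaries of $M_2(A)$, so no further work is needed.

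I do not anticipate any real obstacle: the whole argument is a short computation. The only points requiring care are the sign bookkeeping — in particular that the matrix $w$ of this lemma is the negative (equivalently, the adjoint) of the $w$ used in the preceding lemma, so one must invoke a conjugation identity in the correct form, although the relevant one is insensitive to this sign — together with the routine observation that the $M_2(\C)$-identities of the preceding lemma promote without change to $M_2(A)$.
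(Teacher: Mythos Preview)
Your proof is correct and follows essentially the same route as the paper: both arguments use antimultiplicativity together with the conjugation identity from the preceding lemma to reduce to the relation $v\stau = -\,u\Ttau w$, from which the equivalences are immediate. The only difference is presentational---you derive this identity once and read off all four implications, whereas the paper verifies one forward implication explicitly and defers the remaining cases to ``similarly.''
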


\begin{proof}
Suppose that $u$ satisfies $u\Ttau = u$. Then
\begin{align*}
v\stau &= (uw)\stau \\
	&= w^\sharp u\stau \\
	&= -w (w^* u\Ttau w) \\	
	&= -u\Ttau w \\
	&= -v \;.
\end{align*}
This proves the forward implication of the first statement. The other statements are proven by similarly.
\end{proof}

\begin{prop} \label{prop-K(-1)}
Let $A$ be a real unital $C \sp *$-algebra. Then $KO_{7}(A)$ is isomorphic to the group of homotopy classes of unitaries in 
$\bigcup_{n \in \N} M_{2n}(A_\C)$ that satisfy $v\stau = -v$, subject to the relation
$$[v] = \left[ \diag\left(v,\sm{0}{\1}{-\1}{0} \right) \right] \; .$$
\end{prop}

\begin{proof}
Let $A$ be given. Then we know from \cite{BL} (as indicated in Table~\ref{unitaryTable}) that $KO_{7}(A)$ is isomorphic to the group of homotopy classes of unitaries $u$ in 
$\bigcup_{n \in \N} M_{2n}(A_\C)$ that satisfy $u\Ttau = u$, subject to the relation
$$[u] = \left[ \diag\left(u, 1_2\right) \right] \; .$$
Now if $u$ is such a unitary, then $v = uw$ satisfies $v\stau = -v$ by Lemma~\ref{lemma-sharp/trace}. Furthermore, if $u = 1_2$, then we have $v = 1_2 \cdot w = \sm{0}{\1}{-\1}{0}$. Hence the $[u] \mapsto [uw]$ is a well-defined map from $KO_{7}(A)$ to the group described in the Proposition.

Conversely, if $v$ is a unitary that satisfies $v\stau = -v$, then $u = v(-w)$ satisfies $u \Ttau = u$, again by Lemma~\ref{lemma-sharp/trace}. So $[v] \mapsto [-vw]$ is a map from the group described in the Proposition to $KO_{7}(A)$.

Now $w(-w) = 1_2$, so it follows that these two group homomorphisms are mutual inverses.
\end{proof}

\begin{prop} \label{prop-K(3)}
Let $A$ be a real unital $C \sp *$-algebra. Then $KO_{3}(A)$ is isomorphic to the group of homotopy classes of unitaries in 
$\bigcup_{n \in \N} M_{2n}(A_\C)$ that satisfy $u\Ttau = -u$, subject to the relation
$$[u] = \left[ \diag\left(u,\sm{0}{\1}{-\1}{0} \right) \right] \; .$$
\end{prop}

\begin{proof}
The map $[u] \mapsto [uw]$ provides an isomorphism from the previous picture $KO_3(A)$ from \cite{BL} to the picture described in the Proposition. The proof is similar to the proof of Proposition~\ref{prop-K(-1)}, relying on Lemma~\ref{lemma-sharp/trace} again.
\end{proof}

\vspace{2cm}

\bibliographystyle{amsplain}
\bibliography{antipode-sphere}

\providecommand{\bysame}{\leavevmode\hbox to3em{\hrulefill}\thinspace}
\providecommand{\MR}{\relax\ifhmode\unskip\space\fi MR }
\providecommand{\MRhref}[2]{%
  \href{http://www.ams.org/mathscinet-getitem?mr=#1}{#2}
}
\providecommand{\href}[2]{#2}
\begin{thebibliography}{10}

\bibitem{atiyah66}
M.~F. Atiyah, \emph{{$K$}-theory and reality}, Quart. J. Math. Oxford Ser. (2)
  \textbf{17} (1966), 367--386. \MR{0206940}

\bibitem{blackadarbook}
Bruce Blackadar, \emph{{$K$}-theory for operator algebras}, second ed.,
  Mathematical Sciences Research Institute Publications, vol.~5, Cambridge
  University Press, Cambridge, 1998. \MR{1656031 (99g:46104)}

\bibitem{Boer2002}
Jeffrey~L. Boersema, \emph{Real {$C\sp *$}-algebras, united {$K$}-theory, and
  the {K}\"unneth formula}, $K$-Theory \textbf{26} (2002), no.~4, 345--402.
  \MR{1935138}

\bibitem{Boer2004}
\bysame, \emph{Real {$C\sp *$}-algebras, united {$KK$}-theory, and the
  universal coefficient theorem}, $K$-Theory \textbf{33} (2004), no.~2,
  107--149. \MR{2131747 (2006d:46090)}

\bibitem{Boer2020}
\bysame, \emph{K-theory for real {$C^*$}-algebras via unitary elements with
  symmetries, {P}art {II}---{N}atural transformations and {$KO_*(\Bbb
  R)$}-module operations}, Houston J. Math. \textbf{46} (2020), no.~1, 71--111.
  \MR{4137279}

\bibitem{BL}
Jeffrey~L. Boersema and Terry~A. Loring, \emph{{$K$}-theory for real
  {$C^*$}-algebras via unitary elements with symmetries}, New York J. Math.
  \textbf{22} (2016), 1139--1220.

\bibitem{bousfield90}
A.~K. Bousfield, \emph{A classification of {$K$}-local spectra}, J. Pure Appl.
  Algebra \textbf{66} (1990), no.~2, 121--163. \MR{1075335 (92d:55003)}

\bibitem{CL-2022}
Alexander Cerjan and Terry~A. Loring, \emph{Local invariants identify topology
  in metals and gapless systems}, Phys. Rev. B (2022).

\bibitem{CL2024}
Alexander Cerjan and Terry~A Loring, \emph{Classying photonic topology using
  the spectral localizer and numerical {$K$}-theory}, APL Photonics \textbf{9}
  (2024).

\bibitem{CLSB-2024}
Alexander Cerjan, Terry~A Loring, and Hermann Schulz-Baldes., \emph{Local
  markers for crystalline topology}, Phys. Rev. Lett. \textbf{132} (2024).

\bibitem{DLC-2023}
Kahlil~Y. Dixon, Terry~A. Loring, and Alexander Cerjan, \emph{Classifying
  topology in photonic heterostructures with gapless environments}, Phys. Rev.
  Lett. \textbf{131} (2023).

\bibitem{bradlyn}
B.~Bradlyn et~al., \emph{Topological quantum chemistry}, Nature \textbf{547}
  (2017), 298--305.

\bibitem{hewitt}
Beatrice Hewitt, \emph{On the homotopical classification of {$KO$}-module
  spectra}, Ph.D. thesis, University of Illinois at Chicago, 1996.

\bibitem{JosephMeyer}
Collin~Mark Joseph and Ralf Meyer, \emph{Geometric construction of classes in
  van {D}aele's {$K$}-theory}, J. Math. Phys. \textbf{64} (2023).

\bibitem{Li...2025}
Ki~Young Lee, Stephan Wong, Sachin Vaidya, Terry Loring, and Alexander Cerjan,
  \emph{Classification of fragile topology enabled by matrix homotopy}, Phys.
  Rev. Research \textbf{7} (2025).

\bibitem{Li2003}
Bingren Li, \emph{Real operator algebras}, World Scientific Publishing Co,
  2003.

\bibitem{Loring2015}
Terry~A. Loring, \emph{{$K$}-theory and pseudospectra for topological
  insulators}, Annals of Physics \textbf{356} (2015), 383--416.

\bibitem{parkbook}
E.~Park, \emph{Complex topological {$K$}-theory}, Cambridge University Press,
  2008.

\bibitem{rordambookblue}
M.~R{\o}rdam, F.~Larsen, and N.~Laustsen, \emph{An introduction to {$K$}-theory
  for {$C^*$}-algebras}, London Mathematical Society Student Texts, vol.~49,
  Cambridge University Press, Cambridge, 2000. \MR{1783408 (2001g:46001)}

\bibitem{schochet87}
Claude Schochet, \emph{Topological methods for {$C \sp *$}-algebras {II}:
  Geometric resolutions and the {K}\"unneth formula}, Pacific Journal of
  Mathematics \textbf{98} (1987), 443--458.

\bibitem{schroderbook}
Herbert Schr{\"o}der, \emph{{$K$}-theory for real {$C \sp *$}-algebras and
  applications}, Pitman Research Notes in Mathematics Series, vol. 290, Longman
  Scientific \& Technical, Harlow, 1993. \MR{1267059 (95f:19006)}

\bibitem{SBS}
Hermann Schulz-Baldes and Tom Stoiber, \emph{The generators of the {$K$}-groups
  of the sphere}, Expo. Math. \textbf{41} (2023), no.~4, Paper No. 125519, 13.
  \MR{4667589}

\end{thebibliography}

\end{document}